\def\newaliasedtheorem#1[#2]#3{
  \newaliascnt{#1@alt}{#2}
  \newtheorem{#1}[#1@alt]{#3}
  \expandafter\newcommand\csname #1@altname\endcsname{#3}
}
\theoremstyle{plain}
\newtheorem{theorem}{Theorem}[section]
\theoremstyle{remark}
\theoremstyle{definition}
\theoremstyle{remark}
\numberwithin{equation}{section}
\def\R{\mathbb R}
\def\N{{\mathbb N}}
\def\Z{{\mathbb Z}}
\def\T{{\mathbb T}}
\def\Q{{\mathbb Q}}
\def\nop{12}
\DeclareMathOperator{\diver}{div}
\newcommand*{\RR}{\ensuremath{\mathcal{R}}}
\title{Non-uniqueness of integral curves for autonomous hamiltonian vector fields}
\address{Vikram Giri
\hfill\break  Princeton University, Department of Mathematics,
Washington Rd., Princeton, NJ 08544, USA}
\email{vgiri@math.princeton.edu}
\author[V. Giri and M. Sorella]{Vikram Giri and Massimo Sorella}
\address{Massimo Sorella
\hfill\break  \'Ecole Polytechnique F\'ed\'erale de Lausanne, Institute of Mathematics, Station 8, CH-1015 Lausanne, Switzerland.}
\email{massimo.sorella@epfl.ch}
\begin{document}

\maketitle

\begin{abstract}
In this work we prove the existence of an autonomous Hamiltonian vector field in $W^{1,r}(\T^d; \R^d)$ with $r< d-1$ and $d \geq 4$ for which the associated transport equation has non-unique positive solutions. As a consequence of Ambrosio's superposition principle \cite{A04}, we show that this vector field has non-unique integral curves with a positive Lebesgue measure set of initial data and moreover we show that the Hamiltonian is not constant along these integral curves. 
\end{abstract}
\par
\medskip\noindent
\textbf{Keywords:} autonomous vector fields, flows, Hamiltonian system, ODEs
\par
\medskip\noindent
{\sc MSC (2020): 70H33 - 35A02 - 35D30 - 35Q49 - 34A12.
\par
}

\section{Introduction}\label{sec:intro}

This paper is concerned with the study of Hamiltonian system which are deeply studied because they model various physical systems. More precisely, we consider the following  ODEs on the $d=2d'$ dimensional torus $\T^d \simeq \R^d/ \Z^d$


\begin{align}\label{ode}
\begin{cases}
\dot{\gamma} (t) = J \nabla H(t, \gamma(t))
\\
\gamma (0) = x_0,
\end{cases}
\end{align}
where $H : \T^d \rightarrow \R$ is the autonomous Hamiltonian function, $x_0 \in \R^d$, $\gamma : [0,1] \rightarrow \T^d$ is a curve and $J $ is the $d \times d$ matrix
 $$J=\begin{pmatrix}
 0_{d'}  & I_{d'} \\
 -I_{d'} & 0_{d'}
 \end{pmatrix}.$$
 
  Such a $\gamma$ is called integral curve of the {\em Hamiltonian  vector field} $J \nabla H$ starting from $x_0$, more precisely we give the following definition.
\begin{definition}\label{defn:int-curve}
Let  $u: (0,1) \times \T^d \to \R^d$ be a Borel map. We say that $\gamma \in AC([0,1]; \T^d)$ is an integral curve of $u$ starting at $x$ if $\gamma(0)=x$ and $\gamma'(t) = u(t, \gamma(t))$ for a.e. $t\in [0,1]$.
\end{definition}  
With such definition we recall that if the vector field is Lipschitz, which in our case means that $H \in W^{2, \infty}(\T^d, \R)$, then by Cauchy-Lipschitz theorem we have uniqueness of integral curves of \eqref{ode} for every starting point $x_0 \in \T^d$. 

In the regular setting, i.e. when the Hamiltonian is in $W^{2, \infty}(\T^d)$, we have that the integral curves characterize the unique solution of the following continuity equation 
\begin{align} \label{d_continuity}
\begin{cases}
\partial_t \rho + \diver_x (\rho J \nabla H) = 0,
\\
\rho(\cdot, 0)= \rho_0(\cdot),
\end{cases}
\end{align}
where the unknown is the density $\rho: [0,1] \times \T^d \rightarrow \R$, while the initial datum $\rho_0 \in L^\infty (\T^d)$ and the vector field $J \nabla H \in W^{1, \infty}(\T^d, \R^d)$ are given. More precisely, in this setting the unique solution (uniqueness holds thanks to a simple application of Gr\"onwall lemma) of \eqref{d_continuity}  is given by\footnote{We denote with $X(t, \cdot)_\# (\mu) $ the pushforward of the measure $\mu$ through the map $X$ at the fixed time $t$. } 

\begin{equation}\label{RLF_continuity}
\rho(t, \cdot ) \mathscr{L}^d=X(t,\cdot )_\# ( \rho(0, \cdot ) \mathscr{L}^d),
\end{equation}

where $X : [0,1] \times \T^d \to \R^d$ is  called the flow related to $J \nabla H$ and is defined collecting all the integral curves, namely $X(t, x_0)= \gamma_{x_0}(t)$, where $\gamma_{x_0}$ is the unique integral curve of \eqref{ode} starting from $x_0$.  The last result is classical and is known in the literature as Liouville's theorem.

 A lot of efforts were made in the last decades to understand what happens in the non regular setting. 
The first classical failing is the nonuniqueness of integral curves with vector fields $ u \in C^{0, \alpha} (\T^d, \R^d)$ for $\alpha \in [0,1)$, that means that the previous results cannot be extended easily to less regular spaces.

From now on we will refer to solutions of \eqref{d_continuity} in the sense of distribution, so the reasonable assumption to talk about that is that $H \in W^{1,q}(\T^d, \R^d)$ and $\rho \in L^1 ((0,1), L^p(\T^d))$.

 We now summarize the main results of the last decades that are important to understand our contribution. To do so we will present all the results in the particular case of autonomous hamiltonian vector fields, even if they do not require neither the autonomous hypothesis nor the hamiltonian.  The interested reader can find the general results in the references we provide.
 
DiPerna and Lions proved in \cite{DPL89} a result that implies the uniqueness of \eqref{d_continuity} solutions in the class of densities $\rho \in L^{\infty}((0,1), L^p(\T^d))$ once given $\rho_0 \in L^{p}(\T^d)$ and the autonomous hamiltonian vector field $H \in W^{2, q}(\T^d)$, with $p$ and $q$ such that

\begin{equation}
 \frac{1}{p} + \frac{1}{q} \leq 1.
\end{equation}

The authors in \cite{DPL89} consider also a selection of integral curves, which defines  the so-called ``Regular Lagrangian flow''. In turn, this characterizes the unique solution $\rho$ by \eqref{RLF_continuity} using as flow the ``Regular Lagrangian flow''.
To be precise here we give the definition of the ``Regular  Lagrangian flow'' with the compressibility condition following Ambrosio's definition (see in \cite[Section 6]{A04}).

\begin{definition}[Regular Lagrangian flow]\label{RLF}
Let $u:(0,1)\times\T^d\to \R^d$ be Borel. We say that a Borel map $X:[0,1]\times \R^d\to \R^d$ is a regular Lagrangian flow of $u$ if 
	\begin{enumerate}[(i)]
		\item for $\mathscr{L}^d$-a.e. $x\in \T^d$, $t\mapsto X(t,x)$ is integral curve of $u$ with $X(0,x)=x$,
		\item there is a constant $C>0$ such that for every $t\in[0,1]$, $X(t,.)_\#\mathscr{L}^d\leq C\mathscr{L}^d$.
	\end{enumerate}
\end{definition}


In \cite{A08} Ambrosio proved the superposition principle, which implies the following: every {\em non negative } solution $\rho \in L^1$ to \eqref{d_continuity} with an autonomous hamiltonian vector field such that  $ \rho J \nabla H \in L^{1}$ are transported by a ``generalized flow'', which is roughly speaking a measure supported on integral curves of the associated vector field. See \cite[Theorem 3.2]{A08} for the precise result, known in the literature as ``Ambrosio's superposition principle''. This result connects the ODE \eqref{ode} with the PDE \eqref{d_continuity} in a larger class of regularity where uniqueness of solutions of \eqref{d_continuity}  fails and hence it is crucial for us (see in \cite{MoSz2019AnnPDE,MS20,MoSz2019Calc,BDLC20}).

We now turn to explain our contribution in this context. Large part of this paper is dedicated to prove a nonuniqueness result for positive solutions of the continuity equation \eqref{d_continuity} for Hamiltonian autonomous vector fields, more precisely we will prove the following theorem.

\begin{theorem}\label{t_ce}
Let 
${{d\geq 4}}$ be an even integer, $p \in (1,\infty), r \in [1,\infty]$ be such that 
\begin{equation}
\label{hp:exp}
\frac 1p + \frac 1r > 1+ \frac 1{d-1}
\end{equation}
and denote by $p'$ the dual exponent of $p$, i.e. $\frac{1}{p}+\frac{1}{p'}=1$. 
Then for every $T>0$ there exists an autonomous Hamiltonian $H \in W^{2,r}(\T^d; \R)$ and a  nonconstant $\rho \in C ([0,T], L^1 (\T^d))$ such that $\rho J \nabla H \in C ([0,T], L^1 (\T^d))$ and \eqref{d_continuity} holds with initial data $\rho(0, \cdot) =1$ and for which $\rho \geq c_0$ for some positive constant $c_0$. Moreover, if $p' \geq d-1$, we have the Hamiltonian $H \in C(\T^d; \R)$.
\end{theorem}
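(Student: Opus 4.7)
The plan is to prove Theorem \ref{t_ce} by a convex integration scheme in the spirit of Modena–S\'ekelyhidi and Modena–Sattig, adapted to preserve the Hamiltonian structure at every step. At stage $q$ I would construct a triple $(\rho_q, H_q, R_q)$ solving the relaxed equation
\begin{equation*}
\partial_t \rho_q + \diver\bigl(\rho_q J\nabla H_q\bigr) = -\diver R_q,
\end{equation*}
with $R_q$ a vector-valued ``defect'' that is small in $C([0,T];L^1)$. I would then design additive correctors $\vartheta_{q+1}$ and $h_{q+1}$, set $\rho_{q+1}=\rho_q+\vartheta_{q+1}$ and $H_{q+1}=H_q+h_{q+1}$, and choose the correctors so that the low-frequency part of $\vartheta_{q+1}\, J\nabla h_{q+1}$ exactly cancels $R_q$, leaving a new defect $R_{q+1}$ that is strictly smaller. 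Parameters $\lambda_q$ (oscillation frequency) and $\mu_q$ (concentration) are tuned to balance the error gained from high-frequency interaction, the cost of the inverse divergence, and the spatial concentration, exactly as in the Modena–Sattig scheme; the threshold $\frac1p+\frac1r>1+\frac1{d-1}$ corresponds to the largest range of exponents for which Mikado-type building blocks concentrated along $1$-dimensional lines in $\T^d$ can be balanced against an $L^p$–$W^{2,r}$ budget.

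The central new ingredient is a family of \emph{Hamiltonian Mikado building blocks} $(\Theta_\xi, W_\xi = J\nabla \Psi_\xi)$, indexed by a finite set of integer directions $\xi$, that are mutually disjointly supported, periodic with period $\lambda_q^{-1}$, concentrated on tubular $\mu_q^{-1}$-neighbourhoods of parallel lines, and satisfy $\fint \Theta_\xi W_\xi = \xi$ with all lower Fourier averages vanishing. In two dimensions the prescription is transparent: choose $\Psi_\xi(x)=\psi(\xi^\perp\!\cdot x)$ with $\psi$ a concentrated profile, which gives $J\nabla\Psi_\xi\parallel \xi$ and $\Theta_\xi$ can be taken as another profile of the same transverse variable with prescribed mean of $\Theta_\xi\cdot (J\nabla\Psi_\xi)$. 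For $d=2d'\ge 4$ I would build $\xi$-oriented blocks pair-by-pair on the symplectic coordinate planes: for each direction $\xi$ belonging to one of the $d'$ canonical symplectic $2$-planes, use the $2$-dimensional recipe on that plane and extend by concentrated profiles in the remaining $d-2$ variables so that the support becomes a thin tube around a line, all the while keeping the field of the form $J\nabla(\text{something})$. A standard geometric lemma then shows that $\{\xi\}$ can be chosen to span $\R^d$ with disjoint-support shifts, so that inverting divergence on $R_q$ is reduced to a linear combination of such blocks.

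With the building blocks in place, the routine part of the iteration follows the by-now standard template: introduce a temporal cutoff or corrector to handle the $\partial_t\vartheta_{q+1}$ contribution, define a ``principal corrector'' $\vartheta^{(p)}_{q+1}=\sum_\xi a_\xi(t,x) \Theta_\xi(\lambda_q\cdot)$ and its Hamiltonian counterpart $h^{(p)}_{q+1}=\sum_\xi b_\xi(t,x) \lambda_q^{-1}\Psi_\xi(\lambda_q\cdot)$ with coefficients $a_\xi, b_\xi$ extracted from $R_q$ via a Geometric Lemma on the target $\xi=-R_q$; add small ``divergence'' and ``temporal'' correctors to close the continuity equation; estimate the new defect with the improved H\"older inequalities for fast/slow variables and the inverse-divergence operator. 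The parameters are chosen so that $\|\vartheta_{q+1}\|_{C_tL^p}+\|J\nabla h_{q+1}\|_{C_tW^{1,r}}$ is summable in $q$ (using $\frac1p+\frac1r>1+\frac1{d-1}$), while the defect $\|R_{q+1}\|_{C_tL^1}\to 0$. Summing the series yields $\rho\in C([0,T];L^p)$ and $H\in C([0,T];W^{2,r})$; since $H$ is produced of the form $H=\sum h_q$ with purely spatial oscillations (the time dependence enters only through the slow amplitudes, which can be frozen), a variant with time-independent $H$ is obtained by choosing the amplitudes $b_\xi$ of the Hamiltonian correctors autonomous while placing the time dependence on $a_\xi$. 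The endpoint regularity $H\in C(\T^d;\R)$ for $p'\geq d-1$ follows by Morrey/Sobolev embedding once the explicit concentration estimates of the building blocks are tracked.

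The main obstacle is the simultaneous requirement that (i) the perturbation field $J\nabla h_{q+1}$ be a genuine symplectic gradient (not merely divergence-free), (ii) the pair $(\vartheta_{q+1}, J\nabla h_{q+1})$ realize a prescribed \emph{vector} target $R_q$ (so that one cannot freely rescale one factor against the other as in scalar schemes), and (iii) the building blocks remain disjointly supported to allow the sharp concentration exponent $d-1$. Reconciling these three demands is what forces the symplectic-pair construction and the restriction $d\ge 4$ even; delivering the precise algebraic identities for $\fint \Theta_\xi\,J\nabla\Psi_\xi=\xi$ together with the disjointness, and then verifying that all the usual convex-integration estimates survive the Hamiltonian constraint on $h_{q+1}$, is the technical heart of the argument. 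Positivity $\rho\ge c_0$ and non-constancy come for free from the scheme: starting at $\rho_0\equiv 1$ and imposing $\sum_q \|\vartheta_q\|_{C_tL^\infty_x\text{-localized}}\le \tfrac12$ on a suitable region (using the disjoint supports of the Mikados) guarantees $\rho\ge\tfrac12$ almost everywhere, while the first corrector ensures $\rho\not\equiv 1$, after which Ambrosio's superposition principle plugs the result into the ODE statement.
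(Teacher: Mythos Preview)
Your overall architecture---convex integration with Hamiltonian Mikado pairs $(\Theta_\xi, J\nabla\Psi_\xi)$ and the exponent condition \eqref{hp:exp} coming from line-concentrated blocks---matches the paper. But the proposal has a genuine gap in how the autonomous constraint is enforced, and this gap invalidates two downstream claims. You write $h^{(p)}_{q+1}=\sum_\xi b_\xi(t,x)\,\lambda_q^{-1}\Psi_\xi(\lambda_q\cdot)$ with $b_\xi$ ``extracted from $R_q$'' and then propose to ``freeze'' $b_\xi$. This cannot work: any nonconstant slow amplitude breaks the Hamiltonian structure, since $J\nabla\bigl(b_\xi\,\Psi_\xi(\lambda_q\cdot)\bigr)=b_\xi\,W_\xi(\lambda_q\cdot)+\Psi_\xi(\lambda_q\cdot)\,J\nabla b_\xi$ and the second term is an uncontrolled extra field; and $R_q$ depends on $t$, so no coefficient built from it can be autonomous. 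The paper's resolution is to take the Hamiltonian amplitude to be a \emph{pure numerical constant} $2^{-q}$, so that $h_{q+1}(x)=2^{-q}\lambda_{q+1}^{-1}\sum_\xi H_{\xi,\mu_{q+1}}(\lambda_{q+1}x)$ is universal---completely independent of $R_q$. The entire error is then loaded onto the density perturbation through a level-set decomposition $R_\ell\approx\sum_{n\ge 12}\sum_\xi \chi(\kappa|R_\ell|-n)\,a_\xi(R_\ell/|R_\ell|)\,\tfrac{n}{\kappa}\,\xi$, with the magnitude $n/\kappa$ absorbed into $\Theta_{\xi,\mu,n/\kappa}$.

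This forces two corrections to your conclusions. First, since $\theta^{(p)}_{q+1}$ now carries the full size of $R_\ell$ rather than $|R_\ell|^{1/p}$, the sequence $\|\theta_{q+1}\|_{L^p}$ is not summable and your claim $\rho\in C([0,T];L^p)$ is false; only $\rho\in C([0,T];L^1)$ survives, saved by the concentration factor $\mu_{q+1}^{-(d-1)/p'}$ in $\|\Theta_\xi\|_{L^1}$. (The theorem statement indeed only asserts $L^1$; one must then separately verify that $\rho_q\,J\nabla H_q$ is Cauchy in $C_tL^1$, since H\"older's inequality is no longer available---this is item~(c) of the paper's iterative proposition.) Second, $\theta^{(p)}_{q+1}$ is large in $L^\infty$, so your positivity argument ``$\sum_q\|\vartheta_q\|_{L^\infty\text{-localized}}\le\tfrac12$'' fails. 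The paper instead takes the density profile $\varphi\ge 0$, so that $\theta^{(p)}_{q+1}\ge 0$ pointwise and $\inf(\rho_{q+1}-\rho_q)\ge \theta^{(c)}_{q+1}\ge -\|\theta^{(p)}_{q+1}\|_{L^1}\ge -\delta_{q+1}$. Finally, the conclusion $H\in C(\T^d)$ for $p'\ge d-1$ is not a Sobolev embedding (which would require $p'>d$ or $r>d/2$, neither assumed) but the direct building-block estimate $\|H_{\xi,\mu}\|_{L^\infty}\lesssim \mu^{-1+(d-1)/p'}$, which is bounded precisely when $p'\ge d-1$.
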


The theorem above is proved using the ``convex integration type'' techniques borrowed from a groundbreaking work of Modena and Sz\'ekelyhidi \cites{MoSz2019AnnPDE,MoSz2019Calc} and subsequently improved  by Modena and Sattig \cite{MS20}. 
 We refer to \cite{DLSZ09,DLSZ13,DLSZ17,Is18,BV} and the references therein for the birth of this and related lines of research. See also some  recent results related to convex integration in \cite{CDRS21,CL20,CL20NS,CL21,SP21}.
 

As a consequence of Ambrosio's superposition principle and Theorem \ref{t_ce} we prove a non uniqueness result of integral curves for autonomous Hamiltonian vector fields. This strategy has already been used in the work of Bru\'e, Colombo and De Lellis \cite[Theorem 1.3]{BDLC20}.

\begin{theorem} \label{t_main}
For any even integer $d \geq 4$ and any real number $r< d-1$  there is an autonomous  Hamiltonian  $H \in C(\T^d; \R) \cap W^{2,r}(\T^d; \R)$ such that the following holds for every Borel map $v$ with $v= J \nabla H \ \mathcal{L}^{d}-a.e.:$

(NU) 
There is a measurable set $A \subset \T^d$ with positive Lebesgue measure such that for every $x \in A$ there

\hspace{0.8cm}  are at least two integral curves of $v$ starting at $x$.
\end{theorem}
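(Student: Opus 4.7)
The plan is to combine Theorem~\ref{t_ce} with Ambrosio's superposition principle, following the strategy of Bru\'e--Colombo--De Lellis \cite{BDLC20} adapted to the autonomous Hamiltonian setting.

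First, given $r<d-1$, I would choose an exponent $p>1$ sufficiently close to $1$ so that both hypotheses of Theorem~\ref{t_ce} are met simultaneously. The bound $p'\geq d-1$ (needed to obtain $H\in C(\T^d)$) translates to $p\leq (d-1)/(d-2)$, while the condition $\frac{1}{p}+\frac{1}{r}>1+\frac{1}{d-1}$ tends in the limit $p\to 1^+$ to $\frac{1}{r}>\frac{1}{d-1}$, which is guaranteed by $r<d-1$. Applying Theorem~\ref{t_ce} with this pair $(p,r)$ and $T=1$ thus yields an autonomous Hamiltonian $H\in C(\T^d)\cap W^{2,r}(\T^d)$ together with a nonconstant density $\rho\in C([0,1],L^1(\T^d))$ that solves \eqref{d_continuity} with initial datum $1$, satisfies $\rho\geq c_0>0$, and is such that $\rho J\nabla H\in C([0,1],L^1(\T^d))$.

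Second, I would observe that the constant function $\rho_1\equiv 1$ is itself a distributional solution of \eqref{d_continuity} with the same initial datum, because every Hamiltonian vector field is divergence-free:
\[
\diver(J\nabla H)=\sum_{i,j}J_{ij}\,\partial_i\partial_j H=0
\]
by antisymmetry of $J$ and the symmetry of second derivatives. Setting $\rho_2:=\rho$ and fixing an arbitrary Borel representative $v$ of $J\nabla H$, I would then apply Ambrosio's superposition principle \cite{A08} separately to $\rho_1$ and $\rho_2$. Since both are non-negative $L^1$ densities with $\rho_k\,J\nabla H\in L^1$ and common unit initial datum, this produces two probability measures $\eta_1,\eta_2$ on the path space $\Gamma:=C([0,1];\T^d)$, each concentrated on integral curves of $v$, such that $(e_t)_\#\eta_k=\rho_k(t,\cdot)\,\mathscr{L}^d$ for every $t\in[0,1]$, where $e_t(\gamma):=\gamma(t)$. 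In particular $(e_0)_\#\eta_1=(e_0)_\#\eta_2=\mathscr{L}^d$.

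Third, I would disintegrate each $\eta_k$ with respect to $e_0$, writing $\eta_k=\int_{\T^d}\eta_k^x\,dx$, where for a.e.~$x$ the fiber $\eta_k^x$ is a probability measure supported on integral curves of $v$ starting at $x$. Suppose for contradiction that the set of $x\in\T^d$ for which $v$ admits at least two integral curves is Lebesgue-negligible. Then for a.e.~$x$ there exists a unique integral curve $\gamma_x$ of $v$ issuing from $x$, which forces $\eta_k^x=\delta_{\gamma_x}$ for $k=1,2$ and a.e.~$x$; hence $\eta_1=\eta_2$. Pushing forward by $e_t$ then gives $\rho(t,\cdot)\equiv 1$ for every $t$, contradicting the nonconstancy of $\rho_2$. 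Thus the exceptional set $A$ has positive Lebesgue measure, which is the desired conclusion. I do not anticipate a serious obstacle beyond verifying the exponent choice in the first step; the remaining argument is a routine disintegration once Theorem~\ref{t_ce} is in hand.
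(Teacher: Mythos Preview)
Your proposal is correct and follows essentially the same route as the paper: invoke Theorem~\ref{t_ce} (with an appropriate choice of $p$ close to $1$, which you make explicit and the paper leaves implicit), observe that $\rho\equiv 1$ is a second nonnegative solution since $\diver(J\nabla H)=0$, and then use Ambrosio's superposition principle together with disintegration over $e_0$ to derive a contradiction from almost-everywhere uniqueness of integral curves. The paper packages the last part as the abstract statement ``a.e.\ uniqueness of integral curves $\Rightarrow$ $L^1$ well-posedness'' and then applies it, whereas you argue the contradiction directly with $\eta_1=\eta_2$; these are the same argument in slightly different dress.
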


The case $d=2$ is not included in our theorem: indeed in the 2 dimensional setting our theorem statement would tell us that $\frac{1}{p} + \frac{1}{r} > 2$ which is impossible for $p,r \geq 1$. Moreover, in \cite[Theorem 5.2]{ABC09}, the authors proved a uniqueness result  in dimension $2$ which implies the uniqueness for the continuity equation \eqref{d_continuity} in the class $\rho \in L^1((0,1) \times \R^2)$ for autonomous bounded Hamiltonian vector fields such that $H \in W^{2,1} \cap W^{1, \infty}$ (see \cite[Section 2.15 (iii)]{ABC09} for this implication).
They also explain in \cite[Section 6.2]{ABC09} the additional assumption that is needed in dimensions $d>2$, which in turn is necessary and in general not satisfied by $W^{2,r}(\T^d)$ with $r<d-1$, because of our result Theorem \ref{t_main}. 
 We highlight that the Sard property, which is a key ingredient in \cite[Theorem 5.2]{ABC09},  is true in Sobolev spaces $W^{d,1}$ without any Lipschitz assumption thanks to \cite{BKK15}, where $d$ is the dimension of the space.


Finally we discuss the ``conservation of the Hamiltonian'' along integral curves. It is well known that for a smooth Hamiltonian vector field, the Hamiltonian is constant along integral curves of the flow.  Indeed, in physics  it represents the total energy of the system. For Sobolev fields, the same conclusion is true for the integral curves of the associated Regular Lagrangian Flow. This can be proved by an approximation argument. However, the Hamiltonian is not necessarily constant along the non-unique integral curves constructed in Theorem \ref{t_main}; this is the content of the next theorem.

\begin{theorem}\label{thm:hamilnotcons}
For any integer $d \geq 4$, and any real number $r < d-1$, there is an autonomous Hamiltonian  $H \in C(\T^d; \R) \cap W^{2,r}(\T^d; \R)$ such that the following holds for every $v = J \nabla H$ $\mathcal{L}^d- $a.e.. There is a measurable set $A \subset \T^d$ with positive Lebesgue measure such that for every $x \in A$ there exists $\gamma_x$ integral curve of $v$ starting from $x$ and
 $$H (\gamma_x(0)) > H (\gamma_x(1)),$$
in particular $H$ is not conserved along some integral curves starting from a non negligible set.
\end{theorem}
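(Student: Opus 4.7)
The plan is to combine Theorem \ref{t_ce} with Ambrosio's superposition principle, as in the proof of Theorem \ref{t_main}, adding one additional quantitative ingredient extracted from the convex-integration construction: alongside the nonconstant solution $\rho$ of \eqref{d_continuity} with $\rho(0,\cdot)=1$ and $\rho\geq c_0$, we need the strict inequality
\begin{equation*}
\int_{\T^d} H(x)\, \rho(1, x)\, dx < \int_{\T^d} H(x)\, dx.
\end{equation*}
This inequality encodes an ``anomalous dissipation'' of the Hamiltonian that non-smooth weak solutions can exhibit while smooth ones cannot, and it is the essential new object.

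Granted this inequality, the rest of the proof is short. Applying Ambrosio's superposition principle to $\rho$ (using $\rho J\nabla H\in C([0,1]; L^1(\T^d))$), one obtains a probability measure $\eta$ on $AC([0,1]; \T^d)$ concentrated on integral curves of $v=J\nabla H$ with $(e_t)_\#\eta=\rho(t,\cdot)\mathcal{L}^d$ for every $t$, where $e_t(\gamma):=\gamma(t)$. Then
\begin{equation*}
\int \bigl[H(\gamma(0))-H(\gamma(1))\bigr]\, d\eta(\gamma)
\;=\; \int H\, dx \,-\, \int H\rho(1,\cdot)\, dx \;>\;0,
\end{equation*}
so the set $\Gamma_\downarrow:=\{\gamma: H(\gamma(0))>H(\gamma(1))\}$ has positive $\eta$-measure. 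Disintegrating $\eta=\int \eta_x\, d\mathcal{L}^d(x)$ over the initial marginal $(e_0)_\#\eta=\mathcal{L}^d$, the set $A:=\{x\in\T^d : \eta_x(\Gamma_\downarrow)>0\}$ has positive Lebesgue measure, and a measurable selection from $\eta_x|_{\Gamma_\downarrow}$ produces, for each $x\in A$, an integral curve $\gamma_x$ with $\gamma_x(0)=x$ and $H(\gamma_x(0))>H(\gamma_x(1))$. The extension to every Borel representative $v$ with $v=J\nabla H$ $\mathcal{L}^d$-a.e.\ is handled exactly as in Theorem \ref{t_main}.

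The main obstacle is establishing the strict inequality. The formal identity $\frac{d}{dt}\int H\rho\, dx = \int \rho\, \nabla H\cdot J\nabla H\, dx = 0$ (from the antisymmetry of $J$) would appear to forbid it, but its rigorous justification requires testing \eqref{d_continuity} against the test function $\phi=H$, which at the regularity $H\in W^{2,r}$ with $r<d-1$ is not admissible: the product $\nabla H\otimes \rho\nabla H$ is not controlled in $L^1$ in general, and the pointwise identity $\nabla H\cdot J\nabla H=0$ need not survive weak limits of smooth approximations. The convex-integration scheme of Modena--Sattig as adapted in the proof of Theorem \ref{t_ce} has enough flexibility to impose a definite sign on this ``renormalization defect'': by orienting the Mikado-type building blocks at each stage of the iteration so that mass is transported, on average, from regions where the current approximation $H_k$ is large to regions where it is small, one secures the strict inequality uniformly along the iteration and passes to the limit. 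Verifying that this tuning can be carried out while preserving the regularity, positivity, and integrability estimates of Theorem \ref{t_ce} -- and, in particular, not disrupting the convergence of $H_k$ in $C\cap W^{2,r}$ -- is where the bulk of the technical work lies.
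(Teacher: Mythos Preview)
Your reduction to the strict inequality
\[
\int_{\T^d} H(x)\,\rho(1,x)\,dx \;<\; \int_{\T^d} H(x)\,dx
\]
followed by the superposition principle and disintegration is exactly what the paper does, and your treatment of that second step is fine.

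The gap is in how you propose to secure the inequality. You suggest ``orienting the Mikado-type building blocks at each stage'' so that mass drifts from high-$H$ regions to low-$H$ regions. This is both unverified and, in this scheme, hard to implement: recall that to keep the vector field autonomous, the velocity perturbation $w_{q+1}$ is chosen \emph{universally}, independently of the error $R_\ell$ (and of $H_q$). There is no freedom to ``orient'' it toward a preferred direction without destroying the autonomous Hamiltonian structure. Moreover, you cannot simply invoke Theorem~\ref{t_ce} as a black box: in its proof the starting Hamiltonian is $H_0\equiv 0$, which carries no information about $\int H\rho(1,\cdot)$.

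The paper's mechanism is much simpler and requires no modification of the building blocks. One reruns Proposition~\ref{prop:inductive} from a different starting triple: choose $H_0(x)=\overline{H}_0(\lambda x_1)$ with $\overline{H}_0\in C^\infty_c((0,1/2))$, $\int\overline{H}_0=1$, and $\rho_0(1,x)=\overline{\psi}_0(\lambda x_1)$ where $\overline{\psi}_0$ equals a tiny constant $\Delta$ on $[0,1/2]$ and has integral $1$. By construction $H_0$ is supported where $\rho_0(1,\cdot)$ is small, so $\int H_0\,\rho_0(1,\cdot)\le 4\Delta$ while $\int H_0\,\rho_0(0,\cdot)=\int H_0=1$. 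The gap then survives the iteration because the perturbations are small: item~(b+) gives $\|H-H_0\|_{L^\infty}\lesssim \lambda_0^{-1}$ (here one uses $p'\ge d-1$), and item~(a) gives $\|\rho-\rho_0\|_{L^\infty_t L^1_x}\le\Delta$. A three-line computation then yields $\int H\rho(0,\cdot)\ge 1-\lambda_0^{-1}$ and $\int H\rho(1,\cdot)\le \lambda_0^{-1}+8\Delta$, which is strictly smaller provided $\Delta$ and $\lambda_0^{-1}$ are small enough. No tuning of the building blocks is needed at any stage.
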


We now highlight the main new technical ideas in the convex integration scheme used to prove Theorem \ref{t_ce}:
\begin{itemize}
    \item We perturb the autonomous vector field in a ``universal way'', i.e. independently from the previous error (that in the literature is called ``Reynolds error''). This allows us to preserve the autonomous property of the Hamiltonian.
    \item We notice that it is not necessary to have $\rho \in L^p$ and $J \nabla H \in L^{p'}$, but only need that $\rho, \rho J \nabla H  \in L^1$ in order to make sense of a distributional solution to \eqref{d_continuity} and to apply Ambrosio superposition principle.
    \item We construct autonomous Hamiltonian vector fields (used in the scheme to perturb the previous Hamiltonian vector field)\footnote{This functions used in the perturbation step are called in the literature ``building blocks''.}.
\end{itemize}

\section{Preliminary Lemmas}

\subsection{Geometric lemma} We start with an elementary geometric fact, namely that every vector in $\mathbb R^d$ can be written as a positive linear combination of elements in a suitably chosen finite subset $\Lambda$ of $\mathbb{Q}^d\cap \partial B_1$. This is reminiscent of the geometric lemma in \cite{DLS13} and it is proved in \cite[Lemma 3.1]{BDLC20}.

\begin{lemma}\label{lemma:geom}
	There exists a finite set $\{ \xi\}_{\xi \in \Lambda} \subseteq \partial B_1\cap \mathbb{Q}^d$ and smooth non-negative coefficients $a_\xi(R)$ such that for every $R \in \partial B_1$ 
	\[
	R= \sum_{\xi \in \Lambda} a_\xi(R) \xi\, .
	\]
	Moreover, for each $\xi \in \Lambda$ there exists $\xi^\perp, \xi_1, .., \xi_{d-2}$ such that $\{ \xi, \xi^\perp, \xi_1, ..,\xi_{d-2} \} \subset \partial B_1 \cap \Q^d $ form an orthonormal  basis of $\R^d$ and $J \xi^\perp = \xi$.
	Finally, since we will periodize functions, let $n_\ast \in \N$  be
$$ \max_{\xi \in \Lambda} l_\xi, $$
where $l_{\xi}$ is	
	  the l.c.m. of the denominators of the rational numbers $\xi, \xi^\perp, \xi_1,..,\xi_{d-2}$. 
\end{lemma}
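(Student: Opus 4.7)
The plan is to reduce to the analogous statement in \cite[Lemma 3.1]{BDLC20} after installing the symplectic constraint on the orthonormal bases.

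Step 1 (automatic constraints). For any $\xi \in \partial B_1 \cap \Q^d$, set $\xi^\perp := -J\xi$. Since $J$ is integer-valued, orthogonal, skew-symmetric, and satisfies $J^2 = -I$, the vector $\xi^\perp$ is automatically rational, of unit norm, orthogonal to $\xi$ (by $\xi \cdot J\xi = 0$), and satisfies $J\xi^\perp = -J^2\xi = \xi$. Thus the only nontrivial requirement is to complete $\{\xi, -J\xi\}$ to a rational orthonormal basis of $\R^d$.

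Step 2 (dense family of admissible bases). Let $\mathcal{U}_\Q := \{O \in O(d) \cap GL_d(\Q) : OJ = JO\}$. Under the identification $\R^d \simeq \C^{d'}$ for which $J$ is multiplication by $i$, $\mathcal{U}_\Q$ corresponds to the rational unitaries on $\C^{d'}$. For any $Q \in \mathcal{U}_\Q$ the ordered tuple
\[
B_Q := \{Qe_1, \, Qe_{d'+1}, \, Qe_2, \, Qe_{d'+2}, \, \ldots, \, Qe_{d'}, \, Qe_d\}
\]
is rational and orthonormal, and $-J(Qe_1) = Q(-Je_1) = Qe_{d'+1}$, so $B_Q$ has precisely the shape the lemma asks for with $\xi = Qe_1$. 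A Cayley-type parametrization (rational skew-Hermitian matrices being dense in the Lie algebra of $U(d')$) shows that $\mathcal{U}_\Q$ is dense in $\{O \in O(d) : OJ = JO\}$, and hence $\{Qe_1 : Q \in \mathcal{U}_\Q\}$ is dense in $\partial B_1$.

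Step 3 (decomposition and $n_\ast$). The rest is the partition-of-unity argument of \cite[Lemma 3.1]{BDLC20}. Cover $\partial B_1$ by finitely many open spherical caps, each contained in the open positive cone spanned by some admissible ``first vectors'' $Q_{\alpha,1}e_1, \ldots, Q_{\alpha,d}\, e_1 \in \mathcal{U}_\Q \cdot e_1$, and patch local positive barycentric coefficients against a smooth partition of unity to produce non-negative $a_\xi \in C^\infty(\partial B_1)$ with $R = \sum_{\xi \in \Lambda} a_\xi(R)\xi$. The resulting $\Lambda$ is finite, contained in $\partial B_1 \cap \Q^d$, and every $\xi \in \Lambda$ comes equipped with its rational orthonormal partners from the corresponding $B_Q$. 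Finiteness of $\Lambda$ makes $n_\ast$ well-defined as the maximum over $\xi \in \Lambda$ of the LCM of denominators of $\xi$ together with its partners.

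The main obstacle is Step 2: producing enough rational $J$-commuting orthogonal matrices so that their orbit of $e_1$ is dense in $\partial B_1$. Once this density is in place, the rest is a direct transcription of the classical geometric construction for the positive decomposition.
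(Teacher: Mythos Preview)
The paper does not give its own proof of this lemma: it simply says the statement ``is proved in \cite[Lemma 3.1]{BDLC20}.'' Your proposal is correct and follows exactly the route the paper points to, namely the positive-cone/partition-of-unity construction of \cite{BDLC20}. Where you go further is in Step~2, where you explicitly justify the symplectic constraint $J\xi^\perp=\xi$ by working with rational $J$-commuting orthogonal matrices (rational unitaries under the identification $\R^d\simeq\C^{d'}$) and invoking the Cayley parametrization to get density of their $e_1$-orbit in $\partial B_1$. This is a genuine and necessary addition: \cite{BDLC20} does not treat the Hamiltonian setting, so the completion to a rational orthonormal basis satisfying $J\xi^\perp=\xi$ is not literally contained there. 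Your argument fills that gap cleanly, and after that the remainder is, as you say, a direct transcription of the classical construction.
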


And we also recall the following result from \cite[Lemma 4.2]{BDLC20} to get the property of building blocks disjoint supports.

\begin{lemma}\label{lemma:disjointsupports}
Let $d \geq 3$, $\frac14 >\rho>0$ and $\Lambda \subseteq \mathbb S^{d-1} \cap \mathbb Q^d$ be a finite number of vectors. Then there exists $\mu_0:= \mu_0 (d , \Lambda) >0$ and a family of vectors $\{v_\xi\}_{\xi \in \Lambda} \subseteq \R^d$ such that the periodized cylinders 
$v_\xi+ B_{2\rho \mu^{-1}} + \mathbb R \xi + \mathbb Z^d
$ are disjoint as $\xi$ varies in $\Lambda$, provided $\mu \geq \mu_0$. 
\end{lemma}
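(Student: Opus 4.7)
The strategy is a volumetric greedy argument on the torus $\T^d$. Since each $\xi\in\Lambda$ lies in $\mathbb S^{d-1}\cap\Q^d$, the line $\R\xi$ descends to a closed geodesic $\bar\ell_\xi\subset\T^d$ of finite length $L_\xi$, controlled by the denominators of the coordinates of $\xi$ (essentially the quantity $l_\xi$ of Lemma \ref{lemma:geom}). The periodized cylinder $B_{2\rho\mu^{-1}}+\R\xi+\Z^d$, viewed on $\T^d$, is exactly the $r$-tubular neighborhood $N_r(\bar\ell_\xi)$ with $r:=2\rho\mu^{-1}$, and a Fubini/coarea computation along the closed curve yields the single-tube volume estimate
\[
|N_r(\bar\ell_\xi)|\le C_d\, L_\xi\, r^{d-1}.
\]

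The crucial second step is a Minkowski-difference estimate for two distinct, non-antiparallel $\xi,\xi'\in\Lambda$. Lifting to $\R^d$, the set $\bar\ell_\xi-\bar\ell_{\xi'}$ is the image of $\mathrm{span}(\xi,\xi')+\Z^d$ in $\T^d$; by rationality of $\xi,\xi'$ this span is a rational $2$-plane, so its projection is a closed $2$-dimensional subtorus of $\T^d$. Consequently $N_r(\bar\ell_\xi)-N_r(\bar\ell_{\xi'})$ is contained in the $2r$-neighborhood of this subtorus, and the volume of such a neighborhood satisfies
\[
|N_r(\bar\ell_\xi)-N_r(\bar\ell_{\xi'})|\le C(d,\Lambda)\, r^{d-2},
\]
with the constant uniform in $\Lambda$ since $\Lambda$ is finite (the angles between distinct directions are bounded below and the subtorus areas are bounded above). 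Only $2$ of the $d$ ambient dimensions are ``filled'' by the two tubes, which is the key gain.

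With these estimates in hand I conclude by a greedy induction. Enumerate $\Lambda=\{\xi_1,\dots,\xi_N\}$, collapsing any antipodal pair to a single representative since the corresponding tubes coincide. Pick $v_{\xi_1}$ arbitrarily; given $v_{\xi_1},\dots,v_{\xi_{k-1}}$, the disjointness of $v_{\xi_k}+N_r(\bar\ell_{\xi_k})$ from each $v_{\xi_i}+N_r(\bar\ell_{\xi_i})$ for $i<k$ is equivalent to $v_{\xi_k}-v_{\xi_i}\notin N_r(\bar\ell_{\xi_k})-N_r(\bar\ell_{\xi_i})$. The forbidden locus for $v_{\xi_k}$ therefore has measure at most $(k-1)\,C(d,\Lambda)\, r^{d-2}$, which is strictly smaller than $|\T^d|=1$ for $\mu\ge\mu_0(d,\Lambda)$ sufficiently large; hence a valid $v_{\xi_k}$ exists. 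Lifting the $v_\xi$ back to $\R^d$ yields the claimed family.

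The main obstacle is Step 2: one must exploit the rationality of $\xi$ and $\xi'$ to ensure that the difference of their two closed 1-dim subtori is itself a closed (and hence measure-theoretically tame) 2-dim subtorus. Without rationality, irrational winding could spread the difference set densely over $\T^d$ and the desired $r^{d-2}$ bound would fail. Once this geometric picture is correct, the remaining volume bookkeeping and the greedy counting step are routine.
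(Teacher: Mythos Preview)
The paper does not give its own proof of this lemma; it is simply recalled from \cite[Lemma 4.2]{BDLC20} without argument. So there is no in-paper proof to compare against.

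Your volumetric greedy approach is correct and provides a self-contained argument. The key geometric point---that for rational, non-parallel $\xi,\xi'$ the set $\bar\ell_\xi-\bar\ell_{\xi'}$ is the projection of the rational $2$-plane $\mathrm{span}(\xi,\xi')$ and hence a closed $2$-dimensional subtorus of $\T^d$---is right, and the resulting $O(r^{d-2})$ bound on each forbidden region (which tends to $0$ precisely because $d\ge 3$, matching the hypothesis) makes the greedy selection go through.

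One small correction: your treatment of antipodal pairs is off. If both $\xi$ and $-\xi$ lie in $\Lambda$, the lemma still requires the two shifted tubes to be \emph{disjoint}, so you cannot ``collapse'' them to a single representative with a common shift $v$. The fix is immediate and does not affect the structure of the argument: for parallel directions the Minkowski difference of the two tubes is the $2r$-neighborhood of a closed $1$-dimensional geodesic, whose volume is $O(r^{d-1})$---even smaller than $O(r^{d-2})$---so you can simply drop the collapsing step and include the parallel case in the same union bound.
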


\subsection{Antidivergences}
We recall that the operator $\nabla \Delta^{-1}$ is an anti-divergence when applied to smooth vector fields of $0$ mean. As shown in \cite[Lemma 2.3]{MoSz2019AnnPDE} and \cite[Lemma 3.5]{MS20}, however, the following lemma introduces an improved anti-divergence operator, for functions with a particular structure.

\begin{lemma}\label{lemma23}(Cp. with \cite[Lemma 3.5]{MS20})
	Let $\lambda \in \N$ and $f, g : \T^d \to \R$ be smooth functions, and $g_\lambda= g(\lambda x)$. 
	Assume that $\int g = 0$. Then if we set $\RR (f g_\lambda) = f \nabla \Delta^{-1} g_\lambda -\nabla \Delta^{-1} (\nabla f \cdot \nabla \Delta^{-1}g_\lambda+\int fg_\lambda)$, we have that $\diver  \RR (f g_\lambda) = f g_\lambda-\int fg_\lambda$ and for some $C:=C({k,p})$
	\begin{equation}
	\label{ts:antidiv}
	\|D^k \RR (f g_\lambda)\|_{L^p} \leq C \lambda^{k-1} \|f\|_{C^{k+1}} \| g\|_{W^{k,p}} \qquad \mbox{for every } k\in \N, p\in [1,\infty].
	\end{equation}
\end{lemma}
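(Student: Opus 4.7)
The plan splits naturally into two parts: verifying the divergence identity algebraically, then establishing the norm estimate by exploiting the rapid oscillation of $g_\lambda$ via a rescaling identity and Calder\'on--Zygmund theory.

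First I would check the identity $\diver \RR(fg_\lambda) = fg_\lambda - \int fg_\lambda$ by direct computation. Since $\int g = 0$ implies $\int g_\lambda = 0$, we have $\diver \nabla \Delta^{-1} g_\lambda = g_\lambda$, so the product rule gives
$$\diver(f\nabla \Delta^{-1} g_\lambda) = \nabla f \cdot \nabla \Delta^{-1} g_\lambda + fg_\lambda.$$
Integration by parts on $\T^d$ yields $\int \nabla f \cdot \nabla \Delta^{-1} g_\lambda = -\int fg_\lambda$, so the function $\nabla f \cdot \nabla \Delta^{-1} g_\lambda + \int fg_\lambda$ has zero mean; applying $\diver \nabla \Delta^{-1}$ to it returns it unchanged, and subtracting yields the claimed identity.

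For the estimate, the core observation is the rescaling identity $\nabla \Delta^{-1} g_\lambda(x) = \lambda^{-1} (\nabla \Delta^{-1} g)(\lambda x)$, which one sees from the Fourier series of $g$ (equivalently, $\lambda^{-2}(\Delta^{-1}g)_\lambda$ solves $\Delta u = g_\lambda$). Combined with the invariance of $L^p$ norms under periodic integer rescalings and the Calder\'on--Zygmund boundedness of $\nabla^{k+1}\Delta^{-1}$ on $W^{k,p}$ for $1<p<\infty$, this gives the key bound $\|D^k \nabla \Delta^{-1} g_\lambda\|_{L^p} \leq C\lambda^{k-1}\|g\|_{W^{k,p}}$. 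Expanding $D^k(f\nabla \Delta^{-1} g_\lambda)$ by Leibniz, each term $D^{k-j}f \cdot D^j \nabla \Delta^{-1} g_\lambda$ is bounded in $L^p$ by $\|f\|_{C^{k-j}} \lambda^{j-1}\|g\|_{W^{j,p}}$, with the dominant $j=k$ contribution $\lambda^{k-1}\|f\|_{C^k}\|g\|_{W^{k,p}}$. The second piece $\nabla \Delta^{-1}(\nabla f \cdot \nabla \Delta^{-1} g_\lambda + \int fg_\lambda)$ is handled similarly: its argument has zero mean by construction, so the elliptic bound $\|D^k \nabla \Delta^{-1} h\|_{L^p} \leq C\|h\|_{W^{k-1,p}}$ together with Leibniz on $D^{k-1}(\nabla f \cdot \nabla \Delta^{-1} g_\lambda)$ produces at worst $\lambda^{k-1}\|f\|_{C^{k+1}}\|g\|_{W^{k,p}}$, which is what forces the $C^{k+1}$ norm on the right-hand side.

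I do not expect any serious obstacle: the proof is really a refined version of the standard $\nabla \Delta^{-1}$ antidivergence, and the one new ingredient is that placing $f$ \emph{outside} the antidivergence captures the $\lambda^{-1}$ gain provided by the oscillation of $g_\lambda$. The only technical subtlety is the endpoints $p=1,\infty$, where the Calder\'on--Zygmund operators involved are not bounded on $L^p$; there one proceeds via a fixed-scale Littlewood--Paley or Bernstein estimate (or simply accepts a harmless logarithmic loss), using that the building blocks to which the lemma is applied are smooth.
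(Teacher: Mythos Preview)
The paper does not give an independent proof: it simply cites \cite[Lemma~3.5]{MS20} together with a remark in that paper. Your direct argument---the algebraic check of the divergence identity, then the rescaling $\nabla\Delta^{-1}g_\lambda=\lambda^{-1}(\nabla\Delta^{-1}g)(\lambda\cdot)$ combined with Leibniz---is the standard route behind that citation and is correct for $1<p<\infty$.

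Your concern about the endpoints $p=1,\infty$ is legitimate as you have written the argument, but the proposed workarounds (Bernstein estimates, accepting a logarithmic loss, or appealing to smoothness of the specific building blocks) are unnecessary, and the last of these steps outside the hypotheses of the lemma. The fix is simpler: the only operator one actually needs bounded on $L^p$ is $\nabla\Delta^{-1}$ itself, whose kernel on $\T^d$ has the local singularity $|x|^{1-d}$ and therefore lies in $L^1$; by Young's inequality $\nabla\Delta^{-1}$ is bounded $L^p\to L^p$ for \emph{every} $p\in[1,\infty]$. One avoids the order-zero operator $\nabla^2\Delta^{-1}$ entirely by commuting the derivatives through (all Fourier multipliers), writing $D^{j}\nabla\Delta^{-1}g=\nabla\Delta^{-1}(D^{j}g)$, whence $\|D^{j}\nabla\Delta^{-1}g\|_{L^p}\le C\|D^{j}g\|_{L^p}\le C\|g\|_{W^{k,p}}$ for all $j\le k$ and all $p$. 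The same commutation handles the second term once $D^k$ is moved inside $\nabla\Delta^{-1}$; Leibniz on $D^k(\nabla f\cdot\nabla\Delta^{-1}g_\lambda)$ then produces the claimed $\lambda^{k-1}\|f\|_{C^{k+1}}\|g\|_{W^{k,p}}$ with no Calder\'on--Zygmund input at all.
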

\begin{proof}
	It is enough to combine \cite[Lemma 3.5]{MS20} and the remark in \cite[page 12]{MS20}.
\end{proof}

\subsection{Slow and fast variables}
Finally we recall the following improved H\"older inequality, stated as in \cite[Lemma 2.6]{MoSz2019AnnPDE} (see also \cite[Lemma 3.7]{BV}).
If $\lambda \in \N$ and $f,g:\T^d \to \R$ are smooth functions, then we have 
\begin{equation}
\label{eqn:impr-holder}
\| f(x) g(\lambda x) \|_{L^p} \leq \| f \|_{L^p} \| g \|_{L^p} + \frac{C(p)\sqrt d \|f \|_{C^1} \|g\|_{L^p}}{\lambda^{1/p}}
\end{equation}
and 
\begin{equation}
\label{eqn:l26}
\Big| \int f(x) g(\lambda x) \, dx \Big| \leq\Big| \int f(x) \Big(g(\lambda x) - \int g \Big) \, dx \Big| + \Big| \int f \Big |\cdot \Big| \int g \Big| \leq \frac{\sqrt d \|f \|_{C^1} \|g\|_{L^1}}{\lambda} + \Big| \int f \Big| \cdot \Big | \int g \Big| .
\end{equation}

\section{Building blocks}
Let $0<\rho<\frac 14$ be a constant
. We consider $\varphi\in C^\infty_c (\R^{d-1})$ and $\psi\in C^\infty_c (\R^{d-1})$ which satisfy
	$$\varphi \in C^{\infty}_c(B_\rho) \qquad \int \varphi =1, \qquad  \varphi \geq 0, \qquad 
	$$
	and 
	$$\psi \in C^\infty_c(B_{2\rho}), \qquad \int \psi=0, \qquad \psi (x_1,x_2,..,x_{d-1}) = x_1  \mbox{ on }B_\rho.$$
Given $\mu \gg 1$ we define 
\begin{align*}
\overline{\varphi}_\mu(x):= \mu^{(d-1)/p} \varphi(\mu x),
\\
\overline{\psi}_\mu(x):= \mu^{(d-1)/p'} \psi(\mu x).
\end{align*}

By an abuse of notation, we periodize $ \overline{\varphi} , \overline{\psi}_\mu $ so that the functions are treated as periodic functions defined on $\T^{d-1}$. These periodic functions will allow us to define our building blocks, defined on $\T^d$.

Given $\Lambda$ and $n_\ast \in \N$ as in Lemma \ref{lemma:geom}, for any $\xi \in \Lambda$ (we recall the notation of $\xi^\perp, \xi_1,..,\xi_{d-2}$ as in Lemma \ref{lemma:geom}) and for any $\sigma >0$, we define $\tilde{\Theta}_{\xi, \mu, \sigma}, \tilde{H}_{\xi,\mu} : \T^d \to \R$ and the autonomous Hamiltonian vector field $X_{\tilde{H}_{\xi,\mu}} : \T^d \to \R^d$

\begin{align*}
{\Theta}_{\xi,\mu, \sigma} (x) := \sigma n_\ast^{d-1} \overline{\varphi}_\mu (n_\ast \xi^\perp \cdot (x - v_{\xi}), n_\ast \xi_1 \cdot (x - v_{\xi}), n_\ast \xi_2 \cdot (x - v_{\xi}), .., n_\ast \xi_{d-2}  \cdot (x - v_{\xi})),
\\
{H}_{\xi,\mu}(x) := \frac{1}{\mu}   \overline{\psi}_\mu (n_\ast \xi^\perp \cdot (x - v_{\xi}), n_\ast \xi_1 \cdot (x - v_{\xi}), n_\ast \xi_2 \cdot (x - v_{\xi}), .., n_\ast \xi_{d-2}  \cdot (x - v_{\xi})),
\\
X_{{H}_{\xi,\mu}} := J \nabla {H}_{\xi,\mu}(x).
\end{align*}
where  $\mu \geq \mu_0(d, \Lambda)$ and  $\{v_{\xi} \}_{\xi \in \Lambda}$ are given by Lemma \ref{lemma:disjointsupports} in order to get the following property on the supports of these family of functions
$$\text{supp} X_{H_{\xi, \mu}}   \cap \text{supp} \Theta_{\xi',\mu,\sigma} = \text{supp} X_{H_{\xi, \mu}} \cap \text{supp} X_{H_{\xi', \mu}} = \text{supp} \Theta_{\xi',\mu,\sigma} \cap \text{supp} \Theta_{\xi,\mu,\sigma} = \emptyset,$$
for any $\xi \neq \xi' \in \Lambda$.
\begin{remark}
In the previous definition we just multiply $\varphi$ by $\sigma$, because we will need autonomous vector fields, but the natural choice is to split $\sigma>0$ as $\sigma^{1/p}$ and $\sigma^{1/p'}$ as in \cite[Section 4]{BDLC20}.
\end{remark}

Finally, by standard computations, we have proved the following fundamental lemma for our building blocks.

\begin{lemma} \label{lemma:buildingblocks}
Let $d \geq 4$, $\Lambda \subset \partial B_1 \cap \Q^d$ be a finite set. Then there exists $\mu_0 >0$ such that the following holds.

 There exist two families of functions
$\{ \Theta_{\xi, \mu,\sigma} \}_{\xi, \mu, \sigma} \subset C^\infty(\T^d)$, $\{ H_{\xi, \mu,} \}_{\xi, \mu} \subset C^\infty(\T^d;\R^d)$, where $\xi \in \Lambda$ and $ \sigma, \mu \in \R$ such that 
 for any  $\mu \geq \mu_0$, $\sigma >0$  we have
 
 \begin{equation}\label{eqn:itsolves}
	\diver ( X_{H_{\xi, \mu}} \Theta_{\xi, \mu, \sigma})=0,
	\end{equation}
 $$\diver X_{H_{\xi, \mu}}=0,$$ 
 \begin{equation}
 \label{eqn:avW}
 \int  X_{H_{\xi, \mu}}=0,
 \end{equation}
	\begin{equation}\label{eqn:rightaverage}
	\int  X_{H_{\xi, \mu}} \Theta_{\xi, \mu, \sigma}= \sigma \xi.
	\end{equation}
For any $k\in \N$ and any $s\in [1,\infty]$ one has 
\begin{equation}\label{eqn:Thetanorms}
\| D^k \Theta_{\xi, \mu, \sigma} \|_{L^s}\le C(d,k,s, n_\ast) \sigma \mu^{k + (d-1) ( 1/p - 1/s )},
\end{equation}
\begin{equation}\label{eqn:Hamiltonian_norms}
\| D^k  X_{H_{\xi, \mu}} \|_{L^s} \le C(d,k,s, n_\ast) \mu^{ k+ (d-1)(1/p'-1/s)},
\qquad \| D^k  H_{\xi, \mu} \|_{L^s} \le C(d,k,s, n_\ast) \mu^{ k-1+ (d-1)(1/p'-1/s)}.
\end{equation}
Finally, they have pairwise compact disjoint supports for any $\xi \neq \xi'$, namely
\begin{align} \label{eqn:disjointsupp}
\text{supp} X_{H_{\xi, \mu}}   \cap \text{supp} \Theta_{\xi',\mu,\sigma} = \text{supp} X_{H_{\xi, \mu}} \cap \text{supp} X_{H_{\xi', \mu}} = \text{supp} \Theta_{\xi',\mu,\sigma} \cap \text{supp} \Theta_{\xi,\mu,\sigma} = \emptyset,
\end{align}
for any $\xi \neq \xi'$.
\end{lemma}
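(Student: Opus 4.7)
The strategy is to exploit three structural features of the construction: (i) the Hamiltonian form $X_{H_{\xi,\mu}}=J\nabla H_{\xi,\mu}$, which automatically yields a divergence-free and mean-zero vector field; (ii) the linearity of $\psi$ in its first variable on $B_\rho$, which forces $\nabla H_{\xi,\mu}$ to be constant and parallel to $\xi^\perp$ on $\supp\Theta_{\xi,\mu,\sigma}$; and (iii) the independence of $\Theta_{\xi,\mu,\sigma}$ of the $\xi$-direction, which forces $\nabla\Theta_{\xi,\mu,\sigma}\perp\xi$ everywhere. Three preliminary items are immediate: $\diver X_{H_{\xi,\mu}}=\sum_{i,j}J_{ij}\partial_i\partial_j H_{\xi,\mu}=0$ by antisymmetry of $J$ together with symmetry of the Hessian; $\int_{\T^d}X_{H_{\xi,\mu}}=J\int\nabla H_{\xi,\mu}=0$ by periodicity; and the disjoint-support property \eqref{eqn:disjointsupp} follows directly from Lemma \ref{lemma:disjointsupports}, since both $\supp X_{H_{\xi,\mu}}$ and $\supp\Theta_{\xi,\mu,\sigma}$ are contained in the periodized tubular neighborhood $v_\xi+B_{C\rho/\mu}+\R\xi+\Z^d$ (the $(d-1)$-dimensional generating profiles $\varphi,\psi$ have compact support and the $\xi$-direction is free).

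For the key identity \eqref{eqn:itsolves}, Leibniz gives $\diver(X_H\Theta)=(\diver X_H)\Theta+X_H\cdot\nabla\Theta$, so it suffices to show $X_H\cdot\nabla\Theta=0$. On $\supp\Theta_{\xi,\mu,\sigma}$, the $(d-1)$-tuple $(n_\ast\xi^\perp\cdot(x-v_\xi),\ldots,n_\ast\xi_{d-2}\cdot(x-v_\xi))$ which enters $\bar\psi_\mu$ lies in the region where $\psi(z)=z_1$, hence, up to additive integer translates produced by the periodization (which drop out under differentiation),
\[
H_{\xi,\mu}(x)=\mu^{(d-1)/p'}\,n_\ast\,\xi^\perp\cdot(x-v_\xi)\qquad\text{on }\supp\Theta_{\xi,\mu,\sigma}.
\]
Differentiating and using $J\xi^\perp=\xi$ from Lemma \ref{lemma:geom}, one obtains $X_{H_{\xi,\mu}}=\mu^{(d-1)/p'}n_\ast\,\xi$: a constant vector parallel to $\xi$. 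By construction $\nabla\Theta_{\xi,\mu,\sigma}$ is a linear combination of $\xi^\perp,\xi_1,\ldots,\xi_{d-2}$, all orthogonal to $\xi$, so $X_H\cdot\nabla\Theta=0$ on $\supp\Theta$, and it vanishes trivially outside.

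For the correlation \eqref{eqn:rightaverage}, the constant-direction output of the previous step reduces the problem to computing $\int_{\T^d}\Theta_{\xi,\mu,\sigma}$. The map $x\mapsto n_\ast M(x-v_\xi)$, with $M$ the orthogonal matrix whose rows are $\xi^\perp,\xi_1,\ldots,\xi_{d-2},\xi$, is a linear self-covering of $\T^d$ of degree $n_\ast^d$ (well-defined because $n_\ast M\in \mathbb Z^{d\times d}$ by the choice of $n_\ast$); its constant Jacobian $n_\ast^d$ cancels the degree, so it preserves Lebesgue and
\[
\int_{\T^d}\bar\varphi_\mu\bigl(n_\ast\xi^\perp\cdot(x-v_\xi),\ldots,n_\ast\xi_{d-2}\cdot(x-v_\xi)\bigr)\,dx=\int_{\T^{d-1}}\bar\varphi_\mu\,dy'=\mu^{-(d-1)/p'}
\]
by the scaling $\bar\varphi_\mu=\mu^{(d-1)/p}\varphi(\mu\cdot)$ and $\int\varphi=1$. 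Combining with the prefactor $\sigma n_\ast^{d-1}$ in $\Theta$ and the value $\mu^{(d-1)/p'}n_\ast\xi$ of $X_H$ on $\supp\Theta$ gives the correlation in the direction $\xi$; the residual dimensional $n_\ast$-factor is absorbed into the normalization of $\Theta$ so that \eqref{eqn:rightaverage} reads $\int X_H\Theta=\sigma\xi$ cleanly.

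The norm bounds \eqref{eqn:Thetanorms} and \eqref{eqn:Hamiltonian_norms} are routine scaling: each derivative of $\bar\varphi_\mu$ or $\bar\psi_\mu$ pulls out a factor of $\mu$; each $L^s$ norm of $\mu^\alpha f(\mu\cdot)$ produces $\mu^{\alpha-(d-1)/s}$; and the prefactors $\mu^{(d-1)/p}$, $\mu^{(d-1)/p'}$ built into $\bar\varphi_\mu,\bar\psi_\mu$ (combined with the extra $1/\mu$ in the definition of $H_{\xi,\mu}$) produce exactly the exponents claimed. The pullback by the linear map $x\mapsto(n_\ast\xi^\perp\cdot(x-v_\xi),\ldots)$ only changes the $L^s$-norm on $\T^d$ relative to the $L^s$-norm on $\T^{d-1}$ by an $n_\ast$-dependent constant (same covering argument as above), and this is where the constants $C(d,k,s,n_\ast)$ appear. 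The only genuinely delicate point in the whole proof is the bookkeeping of $n_\ast$-factors in this change of variables; once tracked correctly everything else is, as the authors remark, standard computation.
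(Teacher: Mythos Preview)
Your proof is correct and follows exactly the approach the paper intends: the paper does not give a separate proof of the lemma but simply defines $\Theta_{\xi,\mu,\sigma}$, $H_{\xi,\mu}$, $X_{H_{\xi,\mu}}$ explicitly and then states that ``by standard computations'' the listed properties hold; you have supplied precisely those standard computations. Your identification of the $n_\ast$-bookkeeping in the change of variables as the only delicate point is apt (and, as written in the paper, the prefactors do not obviously match up cleanly---one gets an extra power of $n_\ast$ in \eqref{eqn:rightaverage} unless the normalization is adjusted); this is a cosmetic issue in the paper's definitions rather than a flaw in your argument.
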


\section{Iteration scheme}
As in \cite{MoSz2019AnnPDE} we consider the following system of equations in $[0,T] \times \T^d$
\begin{equation}\label{eqn:CE-R}
\partial_t \rho_{q} + \diver (  \rho_{q} J \nabla H_{q})=-\diver R_{q},
\end{equation}
where we observe that  $\diver J \nabla H =0. $
We then fix three parameters $a_0$, $b>0$ and $\beta>0$, to be chosen later only in terms of $d$, $p$, $r$, and for any choice of $a>a_0$ we define
\[
\lambda_0 =a, \quad \lambda_{q+1} = \lambda_q^b \quad \mbox{and}\quad
\delta_{q} = \lambda_{q}^{-2\beta}\, .
\]
The following proposition builds a converging sequence of functions with the inductive estimates
\begin{equation}
\label{eqn:ie-1}
\max_t \|R_q (t, \cdot)\|_{L^1} \leq \delta_{q+1}
\end{equation}
\begin{equation}
\label{eqn:ie-2}
\max_t \left(\| \rho_q (t, \cdot)\|_{C^1} +\|\partial_t \rho_q (t, \cdot)\|_{C^0} + \|H_q \|_{C^1} + \|H_q \|_{W^{2,p'}} + \| H_q \|_{W^{3,r}} \right) 
\leq \lambda_{q}^\alpha\, ,
\end{equation}
where $\alpha$ is yet another positive parameter which will be specified later. 

\begin{prop}\label{prop:inductive}
	There exist $\alpha,b, a_0, M>5$, $0<\beta<(2b)^{-1}$ such that the following holds. For every $a\geq a_0$, if
	$(\rho_q, H_q, R_q)$ solves \eqref{eqn:CE-R} and enjoys the estimates \eqref{eqn:ie-1}, \eqref{eqn:ie-2}, then there exist $(\rho_{q+1}, H_{q+1}, R_{q+1})$ which solves \eqref{eqn:CE-R}, enjoys the estimates \eqref{eqn:ie-1}, \eqref{eqn:ie-2} with $q$ replaced by $q+1$ and also the following properties:
	\begin{itemize}
		\item[(a)] $\max_t \|( \rho_{q+1}- \rho_q) (t, \cdot)\|_{L^1}  \leq  \delta_{q+1}$
		\item [(b)] $\|H_{q+1}- H_q \|_{W^{2,r}}  + \|H_{q+1}- H_q\|_{W^{1,p'}} \leq  \frac{M}{2^q}$,
		\item [(b+)] If $p' \geq  d-1$, then $\|H_{q+1}- H_q \|_{L^\infty} \leq \frac{M}{ \lambda_{q}} $
		\item [(c)] $\max_t \| \rho_{q+1} J \nabla H_{q+1} - \rho_q J \nabla H_q \|_{L^1} \leq M \delta_{q+1}$
		\item[(d)] $\inf (\rho_{q+1} - \rho_q) \geq - \delta_{q+1}$
		\item[(e)] if for some $t_0>0$ we have that $\rho_q(t, \cdot) = 1$ and $R_q(t, \cdot)=0$ for every $t\in [0,t_0]$, then $\rho_{q+1}(t, \cdot) = 1$ and $R_{q+1}(t, \cdot)=0$  for every $t\in [0,t_0- \lambda_q^{-1-\alpha}]$.
	\end{itemize}
\end{prop}
Our iterative proposition is quite similar to the one proposed in \cite{BDLC20}, but in order to get an autonomous vector field, we put all the `bad' terms form the previous Reynolds error onto the definition of the new density. This gives much worse estimates on the new density: indeed we are unable to control its $L^p$ norm. But the concentration parameter $\mu$ allows us to control the density in the $L^1$ norm. Point (c) in the above proposition allows us to show convergence of $\rho_q J\nabla H_q$ in $L^1$ which was previously done in \cite{BDLC20} using H\"older's inequality.

Also, in order to get an autonomous vector field, we are unable to use the non-autonomous building blocks of \cite{BDLC20} but instead use the autonomous `Mikado flows' that were used as building blocks in \cite{MoSz2019AnnPDE}. The disadvantage of this approach that was already apparent in \cite{MoSz2019AnnPDE}, is that we are unable to get the full dimensional concentration $d$ but are only able to get a $d-1$ in eqn. \ref{hp:exp}.
\subsection{Choice of the parameters}\label{sec:param}
The choice of parameter is very similar to those in \cite[Section 5.1]{BDLC20}.
We define first the constant
\[
\gamma:= \Big(1+\frac 1p\Big) \left(\min \Big\{ \frac{d-1}{p}, \frac{d-1}{p'}, -1-(d-1)  \Big(\frac1{p'}-\frac1r\Big)\Big\}\right)^{-1}>0,
\]
Notice that, up to enlarging $r$, we can assume that the quantity in the previous line is less than $1/2$, namely that $\gamma>2$.
Hence we set $\alpha:=4 + \gamma (d+1)$,
\begin{equation}
\label{eqn:choice-b}
b := \max\{ p, p'\} (3(1+\alpha)(d+2)+2),
\end{equation}
and 
\begin{equation}
\label{eqn:choice-beta}
\beta:= \frac 1{2 b} \min\Big\{ p, p', r, \frac{1}{b+1}\Big\}= \frac{1}{2 b(b+1)} .
\end{equation}
Finally, we choose $a_0$ and $M$ sufficiently large (possibly depending on all previously fixed parameters) to absorb numerical constants in the inequalities.
We set
\begin{equation}
\label{eqn:choice-ell}
\ell: = \lambda_{q}^{-1-\alpha},
\end{equation}
\begin{equation}
\label{eqn:choice-mu}\mu_{q+1} := \lambda_{q+1}^{\gamma}.
\end{equation}

\subsection{Convolution}
The convolution step is the same of \cite[Section 5.2]{BDLC20}. 
We first perform a convolution of $\rho_q$ and $u_q$ to have estimates on more than one derivative of these objects and of the corresponding error. Let $\phi \in C^\infty_c(B_1)$ be a standard convolution kernel in space-time, $\ell$ as in \eqref{eqn:choice-ell} and define 
$$\rho_{\ell} := \rho_{q} \ast \phi_\ell, \qquad H_\ell := H_q \ast \phi_\ell, \qquad u_\ell := u_q \ast \phi_\ell, \qquad R_{\ell} := R_{q} \ast \phi_\ell. $$
We observe that $(\rho_{\ell}, u_\ell, R_{\ell}+ (\rho_{q} u_q)_\ell - \rho_{\ell} u_\ell)$ solves system \eqref{eqn:CE-R}
and  by \eqref{eqn:ie-1}, \eqref{eqn:choice-beta} enjoys the following estimates

\begin{equation}
\label{eqn:r-l-l1}
\|R_\ell \|_{L^1} \leq \delta_{q+1},
\end{equation}
\begin{equation} \label{eqn:rho-ell-conv}
\| \rho_\ell - \rho_q\|_{L^p} \leq  \ell \|\rho_q\|_{C^1} \leq  \ell \lambda_q^\alpha  \leq  \delta_{q+1},
\end{equation}
$$\| u_\ell - u_q\|_{L^{p'}} \leq C \ell \lambda_q^\alpha \leq  \delta_{q+1},$$
$$\| u_\ell - u_q\|_{W^{1,r}} \leq C \ell \lambda_q^\alpha \leq  \delta_{q+1}\, .$$
Indeed note that by \eqref{eqn:choice-beta}
\[
\ell \lambda_q^\alpha = \lambda_q^{-1} = \delta_{q+1}^{\frac{1}{2b\beta}} \ll
\delta_{q+1}
\]
Next observe that
\[
\|\partial_t^N \rho_\ell\|_{C^0} + \| \rho_\ell\|_{C^N} +\| u_\ell\|_{W^{1+N,r}} + \|\partial_t^N u_\ell\|_{W^{1,r}} 
\leq  C(N) \ell^{-N+1}( \| \rho_q\|_{C^1} +\| u_q\|_{W^{2,r}} 
)\leq  C(N) \ell^{-N+1}\lambda_{q}^\alpha
\]
for every $N\in \N\setminus\{0\}$.
Using the Sobolev embedding $W^{d,r}\subset {W^{d,1}\subset }C^0$ we then conclude
\[
\|\partial_t^N u_\ell\|_{C^0} + \|u_\ell\|_{C^N} \leq C (N) \ell^{-N-d+2} \lambda_q^\alpha\, .
\]
By Young's inequality we estimate the higher derivatives of $R_\ell$ in terms of $\|R_q \|_{L^1}$ to get
\begin{equation}\label{e:D_tR}
\| R_\ell\|_{C^N} +\|\partial_t^N R_\ell\|_{C^0} 
\leq \|D^N \rho_\ell \|_{L^\infty} \|R_q \|_{L^1} \leq  C(N) \ell^{-N-d} \leq  C(N) \lambda_{q}^{(1+\alpha)(d+N)}\, 
\end{equation}
for every $N\in \N$. 
Finally, thanks to \cite[Lemma 5.1]{BDLC20} for the last part of the error we have
\begin{equation}\label{e:commutatore}
\|(\rho_q u_q)_\ell - \rho_\ell u_\ell\|_{L^1} \leq C \ell^2 \lambda_q^{2\alpha} \leq \frac{1}{4} \delta_{q+2} ,
\end{equation}
where we have assumed that $a$ is sufficiently large.

\subsection{Definition of the perturbation} \label{sec:perturbation}

Let $\mu_{q+1}>0$ be as in \eqref{eqn:choice-mu} and let $\chi\in C^\infty_c (-\frac{3}{4}, \frac{3}{4})$ such that $\sum_{n \in \mathbb Z} \chi (\tau - n) =1$ for every $\tau\in \mathbb R$.

Fix a parameter $\kappa=\frac{20
}{\delta_{q+2}}$ and consider a finite set $\Lambda \subset \partial B_1 \cap \Q^d $ as in Lemma \ref{lemma:geom} and consider the related building blocks implicitly defined in Lemma \ref{lemma:buildingblocks}.
We define the new density, Hamiltonian ,and vector field by adding to $\rho_\ell$, $H_\ell$ and $u_\ell$ a principal term and a smaller corrector, namely we set
\begin{align*}
\rho_{q+1} &:= \rho_{\ell} + \theta_{q+1}^{(p)}+ \theta_{q+1}^{(c)}\,,\\ 
H_{q+1} &:= H_\ell + h_{q+1}\,,\\
u_{q+1}  &:= u_\ell+  w_{q+1} \  .
\end{align*}
The principal perturbations are given, respectively, by
\begin{align}
\label{eqn:h}
h_{q+1} (x) &= \frac{1}{2^q\lambda_{q+1}} \sum_{\xi \in \Lambda}  H_{\xi , \mu_{q+1}} ( \lambda_{q+1} x),  \\ 
\label{eqn:w}
w_{q+1} (x) &= \frac{1}{2^q} \sum_{\xi \in \Lambda}  X_{H_{\xi , \mu_{q+1}}} ( \lambda_{q+1} x),  \\ 
\label{eqn:theta}
\theta^{(p)}_{q+1} (t,x) &= 2^q \sum_{n \ge 12} \sum_{\xi\in \Lambda } \chi (\kappa |R_\ell (t,x)| - n)   a_{\xi}\left(\frac{R_\ell(t,x)}{|R_\ell(t,x)|}\right) \Theta_{\xi, \mu_{q+1}, n /\kappa} ( \lambda_{q+1} x)\, ,
\end{align}

where we understand that $a_\xi \left(\frac{R_\ell(t,x)}{|R_\ell(t,x)|}\right)$ is well defined because the term $\chi(\kappa |R_\ell(t,x)| -n)$ vanishes at points where $R_\ell$ vanishes.
	Furthermore, in the definition of $\theta^{(p)}_{q+1}$ the first sum runs for $n$ in the range 
	\begin{equation}
	\label{remark:sum is finite}
		12\leq n \leq  C\ell^{-d}\delta_{q+2}^{-1} \le C \lambda_{q}^{d(1+\alpha) + 2\beta b^2}
	\leq C \lambda_{q}^{d(1+\alpha)+1}.
	\end{equation}
	Indeed $\chi(\kappa |R_\ell(t,x)|-n)=0$ if $n\ge 20
	\delta_{q+2}^{-1}\|R_\ell\|_{C^0}+1$ and by \eqref{e:D_tR} we obtain an upper bound for $n$. 
\\
Notice that $u_{q+1}$ is an autonomous Hamiltonian vector field. Indeed, we have that $w_{q+1} = J\nabla h_{q+1}$ and consequently $u_{q+1} = J\nabla H_{q+1}$.\\
The aim of the corrector term for the density is to ensure that the overall addition has zero average:
\[
\theta^{(c)}_{q+1} (t):=- \int \theta_{q+1}^{(p)}(t,x)\, dx.
\]


\section{Proof of the Proposition \ref{prop:inductive}}
In this section, we prove the main iterative proposition \ref{prop:inductive}. The proof is very similar to the one found in \cite{BDLC20} as we have very similar estimates on the building blocks. We provide details for the convenience of the reader.
\begin{lemma}\label{l:uglylemma}
	For $m\in \mathbb N$, $N\in \mathbb N\setminus \{0\}$ and $n\ge 2$ we have
	\begin{align}
	&\|\partial^m_t \chi (\kappa|R_\ell|-n)\|_{C^N} \leq C (m, N) \delta_{q+2}^{-2(N+m)} \ell^{-(N+m)(1+d)} 
	\leq 
	C(m,N) \lambda_q^{(N+m) (d+2) (1+\alpha)}\\
	&\|\partial^m_t (a_\xi ({\textstyle{\frac{R_\ell}{|R_\ell|}}}))\|_{C^N}\leq C (m, N) \delta_{q+2}^{-N-m} \ell^{-(N+m)(1+d)} 
	\leq C(m,N)\lambda_q^{(N+m) (d+2) (1+\alpha)} \qquad\mbox{on $\{\chi (\kappa|R_\ell|-n) >0\}$}.
	\end{align}
\end{lemma}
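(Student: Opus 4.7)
The plan is a Faa di Bruno computation exploiting the key pointwise lower bound $|R_\ell| \geq \delta_{q+2}/16$ on the relevant set. Indeed, on $\{\chi(\kappa|R_\ell|-n)>0\}$ the support property of $\chi \in C^\infty_c(-\tfrac34,\tfrac34)$ forces $|\kappa|R_\ell|-n|<\tfrac34$, so $|R_\ell| \geq (n-\tfrac34)/\kappa \geq 5/(4\kappa) = \delta_{q+2}/16$ whenever $n\geq 2$. Consequently, on this set the maps $y\mapsto |y|$ and $y\mapsto y/|y|$ are smooth at $y=R_\ell$, with $k$-th derivatives bounded respectively by $C(k)\delta_{q+2}^{1-k}$ and $C(k)\delta_{q+2}^{-k}$.

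Next I would extend \eqref{e:D_tR} to mixed derivatives: since $R_\ell=R_q\ast\phi_\ell$ in space--time, Young's inequality yields $\|\partial_t^m D^k R_\ell\|_{C^0}\leq C(m,k)\ell^{-m-k-d}$, so each order of differentiation of $R_\ell$ costs at most $\ell^{-(1+d)}=\lambda_q^{(1+\alpha)(1+d)}$. For the first estimate I would then apply Faa di Bruno to $\chi\circ g$ with $g=\kappa|R_\ell|-n$, bounding the derivatives of $g$ in turn by Faa di Bruno for $y\mapsto|y|$ composed with $R_\ell$. Summing over partitions of the $N+m$ derivatives, the worst case contributes $\bigl(\kappa\cdot\delta_{q+2}^{-1}\cdot\ell^{-(1+d)}\bigr)^{N+m}=C\delta_{q+2}^{-2(N+m)}\ell^{-(N+m)(1+d)}$, which is the first claimed bound; the reformulation in powers of $\lambda_q$ follows from the parameter choices \eqref{eqn:choice-b}, \eqref{eqn:choice-beta}, which force $\delta_{q+2}^{-2}\ell^{-(1+d)}\leq\lambda_q^{(d+2)(1+\alpha)}$.

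For $a_\xi(R_\ell/|R_\ell|)$ the same scheme applies with $\chi\circ(\kappa|\cdot|-n)$ replaced by the smooth function $a_\xi\circ(y\mapsto y/|y|)$, losing one factor of $\kappa$. Faa di Bruno then yields $\delta_{q+2}^{-(N+m)}\ell^{-(N+m)(1+d)}$, again controlled by $\lambda_q^{(N+m)(d+2)(1+\alpha)}$ by the same parameter comparison. The only genuine obstacle is the bookkeeping in Faa di Bruno, confirming that the worst-case partition dominates; otherwise the proof reduces to the smoothness of $\chi$, $a_\xi$, and $y\mapsto y/|y|$ away from the origin, together with the standard mollification bounds on $R_\ell$.
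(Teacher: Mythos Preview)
Your approach is correct and is precisely the standard one. The paper itself does not supply a proof of this lemma: it is stated at the beginning of Section~5 and then used without justification, the understanding being that it is taken over verbatim from the analogous lemma in \cite{BDLC20}. Your sketch---the lower bound $|R_\ell|\gtrsim\delta_{q+2}$ on $\{\chi(\kappa|R_\ell|-n)>0\}$, the homogeneity bounds on derivatives of $y\mapsto|y|$ and $y\mapsto y/|y|$, the mixed space--time mollification estimate extending \eqref{e:D_tR}, and Fa\`a di Bruno---is exactly how such lemmas are proved, and the parameter comparison $\delta_{q+2}^{-2}\ell^{-(1+d)}\le\lambda_q^{(d+2)(1+\alpha)}$ follows from $4\beta b^2<2<1+\alpha$ by \eqref{eqn:choice-beta}. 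Nothing is missing.
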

In the following estimates are crucial the choices of parameters $\ell, b, \beta, \gamma$ and $ \kappa$ fixed in Subsection \ref{sec:param} and the inductive parameters $\lambda_{q+1}, \delta_{q+1}$ and $\mu_{q+1}$.
\subsection{Estimates on the perturbation} Here we show the inductive estimates are satisfied for the perturbed quantities.
\subsubsection{Estimates on the velocity field and Hamiltonian}
\begin{equation}\label{eqn:w}
    \begin{split}
   \|h_{q+1} \|_{W^{1,p'}} \lesssim \|w_{q+1}\|_{L^{p'}} &\leq 2^{-q} |\Lambda| \|X_{H_{\xi, \mu_{q+1}}} (\lambda_{q+1} \cdot)\|_{L^{p'}} \leq 2^{-q} |\Lambda| C \mu_{q+1}^{(d-1)(1/p'-1/p')} \\
    &\leq C |\Lambda| 2^{-q}
    \end{split}
\end{equation}
where we have used \eqref{eqn:Hamiltonian_norms} in the third inequality.
\begin{equation}\label{eqn:w-sobolev}
    \begin{split}
   \|h_{q+1} \|_{W^{2,r}} \lesssim \|w_{q+1}\|_{W^{1,r}} &\leq \|w_{q+1}\|_{L^r} + \|Dw_{q+1}\|_{L^r} \\
    &\leq 2^{-q} |\Lambda| \|X_{H_{\xi, \mu_{q+1}}} (\lambda_{q+1} \cdot)\|_{L^{r}} + 2^{-q} |\Lambda|  \lambda_{q+1} \|D X_{H_{\xi, \mu_{q+1}}} (\lambda_{q+1} \cdot)\|_{L^{r}} \\
    &\leq 2^{-q} |\Lambda| C \mu_{q+1}^{(d-1)(1/p'-1/r)} + 2^{-q} |\Lambda| C \lambda_{q+1} \mu_{q+1}^{1+(d-1)(1/p'-1/r)}\\
    &\leq 2^{-q} |\Lambda| C \lambda_{q+1}^{\gamma(d-1)(1/p'-1/r)} + 2^{-q} |\Lambda| C \lambda_{q+1}^{1 + \gamma(1+(d-1)(1/p'-1/r))}\\
    &\leq 2^{-q} |\Lambda| C \lambda_{q+1}^{-\gamma} + 2^{-q} |\Lambda| C \lambda_{q+1}^{-1/p}\\
    &\leq \delta_{q+1}
    \end{split}
\end{equation}
Here we have used \eqref{eqn:Hamiltonian_norms} in the fourth inequality and \eqref{eqn:choice-mu} in the fifth inequality. 
\subsubsection{Estimates on the density}
Using standard estimates, \eqref{eqn:Thetanorms} and \eqref{remark:sum is finite}
\begin{equation}\label{eqn:theta-pert-p}
\begin{split}
\|\theta^{(p)}_{q+1} (t,\cdot)\|_{L^1} &\leq \sum_{n\ge \nop} \sum_{\xi\in \Lambda} 2^q \| \chi (\kappa |R_\ell (t,x)| - n)  \textstyle{a_{\xi}\left(\frac{R_\ell(t,x)}{|R_\ell(t,x)|}\right)} \|_{C^0} \|\Theta_{\xi, \mu_{q+1}, n /\kappa} (\lambda_{q+1} x) \|_{L^1}\\
&\leq C |\Lambda| 2^q \sum_{n\ge \nop} \frac{n}{\kappa} \mu_{q+1}^{(d-1)(1/p-1)} \leq C |\Lambda| 2^q \kappa^{-1} \lambda_q^{2d(1+\alpha)+2} \mu_{q+1}^{(d-1)(1/p-1)}\\
&\leq C |\Lambda| 2^q \kappa^{-1} \lambda_q^{2d(1+\alpha)+2} \mu_{q+1}^{-(d-1)/p'} \leq \delta_{q+1}
\end{split}
\end{equation}
Now note that by the definition of $\theta^{(c)}_{q+1} (t)$ we have that
\begin{equation}\label{eqn:theta-pert-c}
    |\theta^{(c)}_{q+1} (t)| \leq \|\theta^{(p)}_{q+1} (t,\cdot)\|_{L^1}
\end{equation}
From the above two estimates point $(a)$ of Theorem \ref{prop:inductive} is now proved. Now notice that since $\theta^{(p)}_{q+1}$ is non-negative, we have
\begin{align}
    \inf \theta^{(p)}_{q+1} (t,\cdot) + \theta^{(c)}_{q+1} (t) \geq \theta^{(c)}_{q+1} (t) \geq -\delta_{q+1}
\end{align}
From these computations point $(d)$ of Theorem \ref{prop:inductive} is now proved.
\subsubsection{Estimates for Part (c) of Theorem \ref{prop:inductive}}
Using standard estimates, the improved H\"older inequality \ref{eqn:impr-holder}, \eqref{eqn:Thetanorms} and \eqref{remark:sum is finite} 
\begin{equation}
\label{eqn:tw}
    \begin{split}
        \| \theta_{q+1}^{(p)} w_{q+1} \|_{L^1} &\leq \|\sum_{n \ge 12} \sum_{\xi\in \Lambda } \chi (\kappa |R_\ell (t,x)| - n)   a_{\xi}\left(\frac{R_\ell(t,x)}{|R_\ell(t,x)|}\right) \Theta_{\xi, \mu_{q+1}, n /\kappa} ( \lambda_{q+1} x) X_{H_{\xi,\mu_{q+1}}} (\lambda_{q+1} x) \|_{L^1}\\
        & \le C\lambda_{q+1}^{-1} \sum_{n\ge \nop}\sum_{\xi\in \Lambda} \|  \chi( \kappa|R_\ell| - n ) \textstyle{a_{\xi}\left( \frac{ R_\ell }{ |R_\ell| } \right)} \|_{C^1}
\| \Theta_{\xi, \mu_{q+1}, n/\kappa}X_{H_{\xi,\mu_{q+1}}}\|_{L^1}
\\&\qquad + \sum_{n\ge \nop}\sum_{\xi\in \Lambda} \| \chi( \kappa|R_\ell| - n ) \textstyle{a_{\xi}\left( \frac{ R_\ell }{ |R_\ell| } \right)} \|_{L^1} \| \Theta_{\xi, \mu_{q+1}, n/\kappa}X_{H_{\xi,\mu_{q+1}}}\|_{L^1}
\\& \le C\lambda_{q+1}^{-1} \lambda_{q}^{3(1+\alpha)(d+2)}
+ C \sum_{n\ge \nop} \frac{n}{\kappa} \| \chi( \kappa|R_\ell| - n ) \textstyle{a_{\xi}\left( \frac{ R_\ell }{ |R_\ell| } \right)} \|_{L^1} 
\\& \le \frac{1}{2}\delta_{q+1} + C \|R_\ell\|_{L^1}\le M \delta_{q+1}.
\end{split}
\end{equation}
We could also prove the above indirectly as we know the product cancels with $R_\ell$ up to a very small error.
\subsection{Estimates on higher derivatives}\label{sec:highderiv} Here we show that the perturbed quantities satisfy the estimate \eqref{eqn:ie-2}. By the choice of $\alpha$, since in particular $\alpha\ge 2+\gamma(d+1)$, we have that
\begin{equation}
\label{eqn:est-rhoC1}
\begin{split}
\| \rho_{q+1}\|_{C^1} &\leq \| \rho_{\ell}\|_{C^1}+ \| \theta_{q+1}\|_{C^1} \leq \| \rho_{q}\|_{C^1}
+ 2^q \sum_{n\ge \nop} \sum_{\xi\in \Lambda^{[n]}} \| \chi(\kappa|R_\ell| - n ) \textstyle{ a_{\xi} \left( \frac{ R_\ell }{|R_\ell|}\right)}\|_{C^1} \|\Theta_{\xi, \mu_{q+1},n/\kappa}(\lambda_{q+1}x) \|_{C^1}
\\&\leq C \lambda_q^\alpha + 
C 2^q \lambda_q^{3(1+\alpha)(d+2)} \lambda_{q+1}\mu_{q+1}^{1+(d-1)/p}
\leq \lambda_{q+1}^{\alpha}.
\end{split}
\end{equation}
where in the above calculation, we have used \eqref{eqn:Thetanorms}, \eqref{eqn:choice-mu}, \eqref{remark:sum is finite} and Lemma \ref{l:uglylemma}. An entirely similar estimate is valid for $\|\partial_t \rho_{q+1}\|_{C^0}$. Now we estimate the velocity field using \eqref{eqn:Hamiltonian_norms} and \eqref{eqn:choice-mu}:
\begin{equation}
\label{eqn:est-HamC1}
\begin{split}
\| u_{q+1}\|_{C^0} &\leq \| u_{\ell}\|_{C^0}+ \| w_{q+1}\|_{C^0} \leq \| u_{q}\|_{C^0}
+ 2^{-q} \sum_{\xi\in \Lambda}  \|X_{H_{\xi, \mu_{q+1}}}(\lambda_{q+1}x) \|_{C^0}
\\&\leq \lambda_q^\alpha + 
C 2^{-q} \mu_{q+1}^{1+(d-1)/p'}
\leq \lambda_{q+1}^{\alpha}.
\end{split}
\end{equation}
Now we estimate the Sobolev norms of $u_{q+1}$ which give us the estimates on $H_{q+1}$.
\begin{equation}
    \begin{split}
        \|u_\ell+ w_{q+1}\|_{W^{1,p'}} &\leq \|u_\ell\|_{W^{1,p'}} + \|w_{q+1}\|_{L^{p'}} + \|Dw_{q+1} (t,\cdot)\|_{L^{p'}} \\
        &\leq C\lambda_q^\alpha + 2^{-q}|\Lambda|C + 2^{-q}|\Lambda|C \|D X_{H_{\xi,\mu_{q+1}}}(\lambda_{q+1} \cdot)\|_{L^{p'}}\\
        &\leq C\lambda_q^\alpha + 2^{-q}|\Lambda|C + 2^{-q}|\Lambda|C \lambda_{q+1} \mu_{q+1}^{1+(d-1)(1/p'-1/p')}\\
        &\leq C\lambda_q^\alpha + 2^{-q}|\Lambda|C + 2^{-q}|\Lambda|C \lambda_{q+1}^{1+\gamma} \leq \lambda_{q+1}^\alpha\\
    \end{split}
\end{equation}
where we have used \eqref{eqn:Hamiltonian_norms} and \eqref{eqn:choice-mu}. Similarly we get
\begin{equation}
    \begin{split}
        \|u_\ell+ w_{q+1}\|_{W^{2,r}} &\leq \|u_\ell\|_{W^{2,r}} + \|w_{q+1}\|_{W^{1,r}} + \|D^2w^{(p)}_{q+1} (t,\cdot)\|_{L^r} \\
    &\leq \|u_\ell\|_{W^{2,r}} + 2^{-q} |\Lambda| C + 2^{-q} |\Lambda| C \lambda_{q+1}^2 \|D^2 X_{H_{\xi, \mu_{q+1}}}\|_{L^{r}} \\
    &\leq \|u_\ell\|_{W^{2,r}} + 2^{-q} |\Lambda| C + 2^{-q}|\Lambda| C \lambda_{q+1}^2 \mu_{q+1}^{2+(d-1)(1/p'-1/r)}\\
    &\leq C\lambda_q^\alpha + 2^{-q} |\Lambda| C + 2^{-q} |\Lambda|C \lambda_{q+1}^{2 + \gamma(2+(d-1)(1/p'-1/r))}\\
    &\leq C\lambda_q^\alpha + 2^{-q} |\Lambda| C + 2^{-q} |\Lambda|C \lambda_{q+1}^{\gamma + 1/p'} \leq \lambda_{q+1}^\alpha
    \end{split}
\end{equation}
where we have used \eqref{eqn:w-sobolev} in the second inequality.

\subsection{Estimates on the new Reynold's Stress}
\begin{equation}\label{eqn:new-error}
\begin{split}
-\diver R_{q+1} = & \partial_t \rho_{q+1} + \diver(  \rho_{q+1} u_{q+1}) = \diver ( \theta_{q+1}^{(p)} w_{q+1} - R_\ell) + \partial_t \theta_{q+1}^{(p)}+\partial_t \theta_{q+1}^{(c)} 
\\&+ \diver( \theta_{q+1}^{(p)} u_\ell+ \rho_\ell w_{q+1}  +\theta_{q+1}^{(c)}w_{q+1}
) + \diver ( (\rho_q u_q)_\ell - \rho_\ell u_\ell).
\end{split}
\end{equation}
We firstly observe that point (e) is a consequence of standard mollification properties and the definition of the perturbations in Subsection \ref{sec:perturbation}.

Using Lemma \ref{lemma:geom}, the property $\sum_{n \in \mathbb Z} \chi (\tau - n) =1$ we get
\begin{align}
    \theta_{q+1}^{(p)} w_{q+1} - R_\ell &= \sum_{n \ge 12} \sum_{\xi\in \Lambda } \chi (\kappa |R_\ell (t,x)| - n)   a_{\xi}\left(\frac{R_\ell(t,x)}{|R_\ell(t,x)|}\right) \Theta_{\xi, \mu_{q+1}, n /\kappa} ( \lambda_{q+1} x) X_{H_{\xi,\mu_{q+1}}} (\lambda_{q+1} x) - R_\ell \\
    &= \sum_{n \ge 12} \sum_{\xi\in \Lambda } \chi (\kappa |R_\ell (t,x)| - n)   a_{\xi}\left(\frac{R_\ell(t,x)}{|R_\ell(t,x)|}\right) \left(\Theta_{\xi, \mu_{q+1}, n /\kappa} ( \lambda_{q+1} x) X_{H_{\xi,\mu_{q+1}}} (\lambda_{q+1} x) - \frac{n}{\kappa} \xi \right)\\
    &\quad + \sum_{n \ge 12} \sum_{\xi\in \Lambda } \chi (\kappa |R_\ell (t,x)| - n)   \frac{n}{\kappa} a_\xi \left (\frac{R_\ell(t,x)}{|R_\ell(t,x)|} \right) \xi  -R_\ell
\end{align}
and the last sum is ready to ``cancel'' the error $R_\ell$.
Thus, on taking the divergence, we get
\begin{equation}
    \begin{split}
        \diver(\theta_{q+1}^{(p)} w_{q+1} &- R_\ell)\\
        &= \diver \left(\sum_{n \ge 12} \sum_{\xi\in \Lambda } \chi (\kappa |R_\ell (t,x)| - n)   a_{\xi}\left(\frac{R_\ell(t,x)}{|R_\ell(t,x)|}\right) \Theta_{\xi, \mu_{q+1}, n /\kappa} ( \lambda_{q+1} x) X_{H_{\xi,\mu_{q+1}}} (\lambda_{q+1} x) - R_\ell \right) \\
    &= \sum_{n \ge 12} \sum_{\xi\in \Lambda } \nabla \left(\chi (\kappa |R_\ell (t,x)| - n)   a_{\xi}\left(\frac{R_\ell(t,x)}{|R_\ell(t,x)|}\right)\right) \cdot  \left(\Theta_{\xi, \mu_{q+1}, n /\kappa} ( \lambda_{q+1} x) X_{H_{\xi,\mu_{q+1}}} (\lambda_{q+1} x) - \frac{n}{\kappa} \xi \right) \\
    &\quad + \sum_{n \ge 12} \sum_{\xi\in \Lambda } \diver \left(\chi (\kappa |R_\ell (t,x)| - n)   \frac{n}{\kappa} a_\xi \left ( \frac{R_\ell(t,x)}{|R_\ell(t,x)|} \right) \xi \right) - \diver R_\ell
    \end{split}
\end{equation}
For the first term, we apply the convex integration.
The second term 
where
\[
\tilde R_\ell:= 
\sum_{n \ge 12} \sum_{\xi\in \Lambda } \left(\chi (\kappa |R_\ell (t,x)| - n)   \frac{n}{\kappa} a_\xi \left ( \frac{R_\ell(t,x)}{|R_\ell(t,x)|} \right) \xi \right) = \sum_{n \ge 12} \chi(\kappa |R_\ell|- n)	\frac{R_\ell}{|R_\ell|} \frac{n}{k}.
\]
We have
\begin{align}\label{eqn:tildeRestimate}
\notag
|R_\ell-\tilde R_\ell| & \le \Big|\sum_{n=-1}^{11
} \chi(\kappa |R_\ell|- n) R_\ell \Big|
+\Big| \sum_{n\ge \nop} \chi(\kappa |R_\ell|- n)\left( \frac{R_\ell}{|R_\ell|}\frac{n}{k}-R_\ell\right)	\Big|   
\\& \le \frac{13
}{\kappa} + \sum_{n\ge \nop} \chi(\kappa |R_\ell|- n)\left| |R_\ell|-\frac{n}{\kappa}\right|
\\& \le \frac{13
}{20
}\delta_{q+2}+\frac{3}{40
}\delta_{q+2}
\le \frac{15
}{20
} \delta_{q+2}.
\end{align}
We can now define $R_{q+1}$ which satisfies \eqref{eqn:new-error} as 
\begin{equation}
\begin{split}
-R_{q+1} :=& R^{quadr}+ (\tilde R_\ell- R_\ell) + R^{time}  +\theta_{q+1}^{(p)} u_\ell+ \rho_\ell w_{q+1} +[(\rho_q u_q)_\ell - \rho_\ell u_\ell],
\end{split}
\end{equation}
where
\begin{equation}
\label{defn:Rquadr}
R^{quadr}: = \sum_{n\ge \nop} \sum_{\xi\in \Lambda} \mathcal{R} \left[  \nabla \left(\chi(\kappa |R_\ell|- n)
\textstyle{a_{\xi}\left(\frac{R_\ell}{|R_\ell|}\right)}\right)\cdot  \left((\Theta_{\xi, \mu_{q+1}, n /\kappa} X_{H_{\xi, \mu_{q+1}}})(\lambda_{q+1}x) - \frac{n}{\kappa} \xi \right)\right],
\end{equation}
\begin{equation}
\label{eqn:Rc}
R^{time}:=\nabla \Delta^{-1} (\partial_t \theta_{q+1}^{(p)} +\partial_t \theta_{q+1}^{(c)}),
\end{equation}

Notice that $R^{quadr}$ is well defined and by \eqref{eqn:rightaverage} the function $(\Theta_{\xi, \mu_{q+1}, n /\kappa} X_{H_{\xi, \mu_{q+1}}})(\lambda_{q+1}x)- \frac{n}{\kappa} \xi$ has $0$ mean.
We have that $ \partial_t \theta_{q+1}^{(p)} +\partial_t \theta_{q+1}^{(c)}$ has $0$ mean, so that $R^{time}$ is well defined.\\
We now estimate in $L^1$ each term in the definition of $R_{q+1}$.
Recall that  the estimate on $\|(\rho_q u_q)_\ell - \rho_\ell u_\ell\|_{L^1}$ has been already established in \eqref{e:commutatore}. 

By the property \eqref{ts:antidiv} of the anti-divergence operator $\RR$, Lemma \ref{l:uglylemma} and \eqref{remark:sum is finite} we have
\begin{align*}
\|R^{quadr}\|_{L^1}  & \leq \frac C {\lambda_{q+1}} \sum_{n\ge \nop}\sum_{\xi\in \Lambda}
\| \chi(\kappa |R_\ell|- n)
\textstyle{a_{\xi}\left(\frac{R_\ell}{|R_\ell|}\right)}\|_{C^2}\|\Theta_{\xi, \mu_{q+1}, n/\kappa} X_{H_{\xi, \mu_{q+1}}}(\lambda_{q+1}x)\|_{L^1} \\&\leq C \delta_{q+2} \frac{\lambda_q^{4(1+\alpha)(d+2)+2}}{\lambda_{q+1}} \leq \frac{\delta_{q+2}}{20}.
\end{align*}
To estimate the terms which are linear with respect to the fast variables, we take advantage of the concentration parameter $\mu_{q+1}$. First of all, by Calderon-Zygmund estimates we get
\begin{align*}
\| R^{time} \|_{L^1} \le  C \| \partial_t \theta_{q+1}^{(p)}+\partial_t \theta_{q+1}^{(c)}\|_{L^1}
\le  \| \partial_t \theta_{q+1}^{(p)} \|_{L^1}+ | \partial_t \theta_{q+1}^{(c)} |	.
\end{align*}
Next, notice that
\begin{align}\label{eqn:tpl1}
\| \partial_t \theta_{q+1}^{(p)} \|_{L^1}
& \leq 
C 2^q \sum_{n\ge \nop}\sum_{\xi\in \Lambda}\| \partial_t\big[ \chi(\kappa |R_\ell|- n) \textstyle{a_{\xi}\left( \frac{R_\ell}{|R_\ell|} \right)}\big]\|_{C^0}
\|\Theta_{\xi, \mu_{q+1}, n /\kappa} \|_{L^1}
\\& \leq C 2^q \delta_{q+2} \lambda_{q}^{3(1+\alpha)(d+2)}\mu_{q+1}^{-d/p'} \leq \frac{\delta_{q+2}}{20}.
\end{align}
From \eqref{eqn:tpl1}, \eqref{eqn:itsolves} and \eqref{eqn:l26} we get
\begin{align*}
| \partial_t \theta_{q+1}^{(c)} |	
\le \| \partial_t \theta_{q+1}^{(p)} \|_{L^1}
\le \frac{\delta_{q+2}}{20}
\end{align*} 
Similarly, we have that
\begin{equation} \label{eqn:slow_interaction}
\begin{split}
\| & \theta_{q+1}^{(p)} u_\ell + \rho_\ell w_{q+1}\|_{L^1} \leq \| \theta_{q+1}^{(p)}\|_{L^1} \| u_\ell\|_{L^\infty} + \|\rho_\ell\|_{L^\infty} \|w_{q+1}\|_{L^1}\\
&\leq  \sum_{n\ge \nop} \sum_{\xi \in \Lambda^{[n]}} 2^q \| \chi(\kappa|R_\ell| - n ) \textstyle{ a_{\xi} \left( \frac{ R_\ell }{|R_\ell|} \right) }\|_{L^\infty} \| \Theta_{\xi, \mu_{q+1},n/\kappa}\|_{L^1} \| u_\ell\|_{L^\infty} + \|\rho_\ell\|_{L^\infty} \|X_{H_{\xi, \mu_{q+1}}}\|_{L^1}\\
&\leq C 2^q \delta_{q+2}^{1/p} \lambda_{q}^{2(1+\alpha)(d+2)} \mu^{-d/p'}_{q+1} + C\delta_{q+2}^{1/p'}
\lambda_{q}^{2(1+\alpha)(d+2)}\mu^{-d/p}_{q+1} \leq \frac{\delta_{q+2}}{20}
\end{split}
\end{equation}
In the last inequality we used  $2\beta b^2\le 1$, the definition of $\gamma$, and  $b(1+1/p)\ge 2(1+\alpha)(d+2)+1$.
\subsection{Proof of items (b), (b+) and (c)}
Firstly we prove point (b). By definitions \eqref{eqn:choice-ell}, \eqref{eqn:h} and by the estimates \eqref{eqn:ie-2}, \eqref{eqn:w-sobolev} we have
\begin{align*}
\| H_{q+1} - H_{q} \|_{W^{2,r}} \leq \| H_{q} - H_{\ell} \|_{W^{2,r}} + \| h_{q+1} \|_{W^{2,r}} \leq
\ell \lambda_q^\alpha + \frac{C|\Lambda|}{\lambda_{q+1}^{1/p}}
\leq  \frac{1}{\lambda_q} + \frac{C|\Lambda|}{\lambda_{q+1}^{1/p}} \leq \frac{M}{\lambda_q},
\end{align*}
and similarly we can estimate $\| H_{q+1} - H_{q} \|_{W^{1,p'}}$ using \eqref{eqn:w} instead of \eqref{eqn:w-sobolev}. 
If $p' \geq d-1$, by Lemma \ref{lemma:buildingblocks}, the definition of $\ell$ and \eqref{eqn:ie-2} we get
\begin{align*}
\| H_{q+1} - H_{q} \|_{L^\infty} \leq \| H_{q} - H_{\ell} \|_{L^\infty} + \| h_{q+1} \|_{L^\infty} \leq \frac{1}{\lambda_q} + \frac{C|\Lambda|}{2^q \lambda_{q+1}} \leq \frac{M}{\lambda_q},
\end{align*}
which leads to point (b+).
For point (c), using \eqref{eqn:tw}, \eqref{eqn:slow_interaction}, \eqref{eqn:choice-ell}, $\diver u_\ell =0$ and standard mollification estimates (see for instance \cite[Lemma 5.1]{BDLC20}) we have  the following
\begin{align*}
    \| \rho_{q+1} J \nabla H_{q+1} - \rho_q J \nabla H_q \|_{L^1} &\leq \| (\rho_{\ell} + \theta_{q+1})(u_\ell + w_{q+1}) - \rho_q J \nabla H_q \|_{L^1}\\
    &\leq \| \theta_{q+1}w_{q+1} \|_{L^1} + \| \theta_{q+1} u_\ell + \rho_\ell w_{q+1}\|_{L^1} + \|\rho_{\ell} u_\ell - \rho_q J \nabla H_q \|_{L^1}\\
    &\leq 2\delta_{q+1} + \frac{\delta_{q+2}}{20} + \|\rho_{\ell} u_\ell - (\rho_q u_q)_\ell \|_{L^1} + \|(\rho_q u_q)_\ell - \rho_q J \nabla H_q \|_{L^1}\\
    &\leq M \delta_{q+1}
\end{align*}
Note that the above estimate would imply that $\rho_q J \nabla H_q \to f$ in $L^1$ for some $f \in L^1([0,T] \times \T^d ; \R^d)$. But as $\rho_q \to \rho$ and $J \nabla H_q \to J \nabla H$ in $L^1$, up to subsequences they converge pointwise a.e. Thus we get that $f = \rho J \nabla H$ a.e. on $[0,T] \times \T^d$.
\section{Proof of the main results}
The proof of Theorem \ref{t_ce} and Theorem \ref{t_main} are quite similar to \cite[Theorem 1.4, Theorem 1.3]{BDLC20} respectively, but we write them here for the convenience of the reader.

\subsection{Proof of Theorem \ref{t_ce}}

Without loss of generality we assume $T=1$.
Let $\alpha,b, a_0, M>5$, $\beta>0$ be fixed as in Proposition~\ref{prop:inductive}. Let $a \geq a_0$ be chosen such that
$$ \sum_{q=0}^\infty \delta_{q+1} \leq \frac 1 {32M}.$$
Let $\chi_0$ be a smooth time cut-off which equals $1$ in $[0,1/3]$ and $0$ in $[2/3,1]$, 

We set $ \lambda = 20a$ and define the starting triple $(\rho_0,  H_0, R_0)$ of the iteration as follows:
\[
\rho_0 = \chi_0(t) + \Big(1+\frac{\sin( \lambda x_1)}{4}\Big) (1- \chi_0(t)), \qquad H_0 = 0, \qquad R_0 =- \partial_t \chi_0\frac{ \cos (\lambda x_1)}{4 \lambda} e_1\, .
\]
Simple computations show that the tripe enjoys \eqref{eqn:CE-R} with $q=0$. Moreover $\|R_0\|_{L^1} \leq C \lambda^{-1} = (1/20)C \lambda_0^{-1}$ and thus \eqref{eqn:ie-1} is satisfied because $2\beta <1$ (again we need to assume $a_0$ sufficiently large to absorb the constant). Next $\|\partial_t \rho_0\|_{C^0}+\|\rho_0\|_{C^1}\leq C \lambda = 20C \lambda_0$. Since $H_0\equiv 0$ and $\alpha >1$, we conclude that \eqref{eqn:ie-2} is satisfied as well. 

Next use Proposition~\ref{prop:inductive} to build inductively $(\rho_q, H_q, R_q)$ for every $q\geq 1$. The sequence $\{\rho_q\}_{q\in \N}$ is Cauchy in $C (L^{1})$ and we denote by $\rho \in  C ([0,1], L^{1})$ its limit.
Similarly the sequence of autonomous Hamiltonians $\{ H_q\}_{q\in \N}$ is Cauchy in $W^{1,p'}$ and $ W^{2,r}$; hence, we define $H \in   W^{1,p'} \cap W^{2,r}$ as its limit. Moreover, thanks to the property (c) and the fact that the sequences $\rho_q$ and $J \nabla H_q$ (up to subsequences) converge pointwise a.e. we get that $\rho_q J \nabla H_q$ converges in $C(L^1)$ to $\rho J \nabla H$.

Clearly $\rho$ and $J \nabla H$ solve the continuity equation and $\rho$ is non-negative on $\T^d$ by
\[
\inf_{\T^d} \rho \geq \inf \rho_0 + \sum_{q=0}^\infty  \inf (\rho_{q+1} - \rho_q) \geq \frac 34 - \sum_{q=0}^\infty \delta_{q+1} \geq \frac 14\, .
\]
Moreover, $\rho$ does not coincide with the solution which is constantly $1$, because
\[
\|\rho - 1\|_{L^p} \geq \|1-\rho_0\|_{L^p} - \sum_{q=0}^\infty  \|\rho_{q+1}-\rho_q\|_{L^p}  \geq  \frac 1 {16}- M \sum_{q=0}^\infty \delta_{q+1} >0.
\]
Finally, since $\rho_0(t, \cdot) \equiv 1 $ for $t\in [0,1/3]$, point (c) in Proposition~\ref{prop:inductive} ensures that  $\rho (t, \cdot) \equiv 1$ for every $t$ sufficiently close to $0$.

\subsection{Proof of Theorem~\ref{t_main}}
We first recall a general fact: if $u$ is an everywhere defined Borel vector field in $L^1([0,T] \times \T^d; \R^d)$ such that, for a.e. $x \in \T^d$, the integral curve starting from $x$ is unique, then the corresponding continuity equation is well posed in the class of non-negative, $L^1([0,T] \times \T^d)$ solutions for any $L^1$ initial datum such that $\rho u \in L^1([0,T] \times \T^d)$. 

Indeed, Ambrosio's superposition principle (see e.g. \cite[Theorem 3.2]{A08}) guarantees that each non-negative, $L^1([0,T] \times \T^d)$ solution such that $\rho u \in L^1([0,T] \times \T^d)$ is transported by integral curves of the vector field, namely (no matter how the Borel representative is chosen) there is a probability measures $\eta$ on the space of absolutely continuous curves, supported on the integral curves of the vector field in the sense of Definition~\ref{defn:int-curve},  such that $\rho(t,x)\, \mathscr{L}^d = (e_t)_\# \eta$ for a.e. $t\in [0,T]$ (where $e_t$ is the evaluation map at time $t$). 
Let us consider the disintegration $\{\eta_x\}_{x \in \T^d}$ of $\eta$ with respect to the map $e_0$, which is $\rho_0$-a.e. well defined; since by assumption for a.e. $x \in \T^d$, the integral curve starting from $x$ is unique (and hence coincides with the regular Lagrangian flow),
 we deduce that $\eta_x$ is a Dirac delta on the curve $t \to X(t,x)$ and consequently $\rho(t,\cdot ) \mathscr{L}^d = X(t,\cdot)_\# (\rho_0 \mathscr{L}^d)$. This concludes the proof of the claim.

Let $u= J \nabla H$ be the autonomous Hamiltonian vector field given by Theorem \ref{t_ce} and observe that the Cauchy problem for the continuity equation \eqref{d_continuity} from the initial datum $\rho_0 \equiv 1$ has two different non-negative solutions in $[0,T]$: $\rho^{(1)} \equiv 1$ and the non-constant solution $\rho^{(2)}$ given by Theorem \ref{t_ce}. Hence, by the previous observation we conclude that there exists a set of initial data of positive measure such that the corresponding integral curves are non-unique. Since the fact that the two functions are distinct solutions of the continuity equation is independent of the pointwise representative chosen for the vector field, this completes the proof of Theorem \ref{t_main}.

\section{Non conservation of the Hamiltonian along the trajectories}

\subsection{Proof of Theorem \ref{thm:hamilnotcons}}
We first prove a Theorem in the spirit of Theorem \ref{t_ce}. We fix $p' \geq d-1$.
Let $\alpha,b, a_0, M>5$, $\beta>0$ be fixed as in Proposition~\ref{prop:inductive} and $a \geq a_0$ be chosen such that
$$ \Delta: = \sum_{q=0}^\infty \delta_{q+1} \leq \frac 1 {16}.$$
We fix $ \overline{\psi}_0 \in C^\infty_c ((0,1))$ such that  $\overline{\psi}_0 \geq \Delta$,
$\overline{\psi}_0(x) = \Delta$ for any $x \in [0,1/2]$, $\| \overline{\psi}_0 \|_{L^\infty} \leq 4$, $\int_0^1 \overline{\psi}_0 =1$ (just use the convolution of a proper function and the standard convolution properties).
Then we fix $\overline{H}_0 \in C^\infty_c((0,1/2))$ $\overline{H}_0 \geq 0$ $\int \overline{H}_0 =1,$ $\| \overline{H}_0 \|_{L^\infty} \leq 4 $. We extend these two functions as periodic functions on $\T^d$, imposing that they are independent on the last $d-1$ variables (we call them with the same name with a slight abuse of notation).   
 
Let $\chi_0$ be a smooth time cut-off which equals $1$ in $[0,1/3]$ and $0$ in $[2/3,1]$, 

We set $ \lambda = 20a$ and define the starting triple $(\rho_0,  H_0, R_0)$ of the iteration as follows:
\[
{\rho}_0(t,x) = \chi_0(t) + \overline{\psi}(\lambda x) (1- \chi_0(t)), \qquad {H}_0(x) = \overline{H}_0(\lambda x_1), \qquad R_0 =- \partial_t \chi_0 \RR (\overline{\psi} (\lambda \cdot ) -1) ,
\]
where we have periodized the functions $\overline{\psi}$ and $\overline{H}_0$ with the parameter $\lambda_0$.
Simple computations show that the tripe enjoys \eqref{eqn:CE-R} with $q=0$. Moreover, thanks to Lemma \ref{lemma23}, $\|R_0\|_{L^1} \leq C \lambda^{-1} = (1/20)C \lambda_0^{-1}$ and thus \eqref{eqn:ie-1} is satisfied because $2\beta <1$ (again we need to assume $a_0$ sufficiently large to absorb the constant). Next $\|\partial_t \rho_0\|_{C^0}+\|\rho_0\|_{C^1}\leq C \lambda = 20C \lambda_0$. Since  $\alpha >1$, we conclude that \eqref{eqn:ie-2} is satisfied as well. 

Next use Proposition~\ref{prop:inductive} to build inductively $(\rho_q, H_q, R_q)$ for every $q\geq 1$. The sequence $\{\rho_q\}_{q\in \N}$ is Cauchy in $C (L^{1})$ and we denote by $\rho \in  C ([0,1], L^{1})$ its limit.
Similarly the sequence of autonomous Hamiltonians $\{ H_q\}_{q\in \N}$ is Cauchy in $W^{1,p'}$, $ W^{2,r}$ and $L^\infty$ (for the last property we used property (b+) of Proposition \ref{prop:inductive} since $p' >d-1$); hence, we define $H \in  W^{1,p'} \cap W^{2,r} \cap  L^\infty$ as its limit, that is also continuous. 
Moreover, thanks to the property (c) and the fact that the sequences $\rho_q$ and $J \nabla H_q$ converge a.e. we get that $\rho_q J \nabla H_q$ converges in $C(L^1)$ to $\rho J \nabla H$.

Clearly $\rho$ and $J \nabla H$ solve the continuity equation and $\rho$ is non-negative on $\T^d$ by
\[
\inf_{\T^d} \rho \geq \inf \rho_0 + \sum_{q=0}^\infty  \inf (\rho_{q+1} - \rho_q) \geq \Delta - \Delta = 0.
\]

Now we apply the Ambrosio's  superposition principle (see e.g. \cite[Theorem 3.2]{A08}) to the non negative solution $\rho$ (note that $\rho J \nabla H \in L^1$), which guarantees that $\rho$
is transported by integral curves of the vector field $J \nabla H$, namely (no matter how the Borel representative is chosen) there is a probability measures $\eta$ on the space of absolutely continuous curves, supported on the integral curves of the vector field in the sense of Definition~\ref{defn:int-curve},  such that $\rho(t,x)\, \mathscr{L}^d = (e_t)_\# \eta$ for a.e. $t\in [0,T]$ (where $e_t$ is the evaluation map at time $t$). 
We use the notation $\{\eta_x\}_{x \in \T^d}$, that are probability measure defined by the disintegration of $\eta$ with respect to the map $e_0$, which is $\mathcal{L}^d$-a.e. well defined.
 To conclude the proof of our theorem is sufficient to prove that 
$$ \int_{\T^d} \int_{AC} H(\gamma (0)) d \eta_x (\gamma) dx > \int_{\T^d} \int_{AC} H(\gamma (1)) d \eta_x (\gamma) dx,$$
because $\eta_x$ is concentrated, for $\mathcal{L}^d$ a.e. $x \in \T^d$, on the family of absolutely continuous integral curves of $J \nabla H$. Thanks to the superposition principle it is equivalent to prove that 
$$ \int_{\T^d}  H(x) \rho(0,x) dx >  \int_{\T^d}  H(x) \rho(1,x) dx,$$
notice that the solution $\rho$ of the continuity equation with respect to the vector field $J \nabla H$ is independent to the pointwise representative of $J \nabla H$, this would conclude the proof.

 By properties (b+), (d) of Proposition \ref{prop:inductive} and the definition of $\rho_0$, 
we have the following estimates
\begin{align*}
\int_{\T^d}  H(x) \rho(0,x) dx = \int_{\T^d}  H(x) dx \geq  \int_{\T^d}  H_0(x) dx - \| H - H_0 \|_{L^\infty} \geq 1 - \frac{1}{\lambda_0}.
\end{align*}
and
\begin{align*}
\int_{\T^d} &  H(x) \rho(1,x) dx 
\\
&= \int_{\T^d}  (H(x) - H_0(x)) \rho(1,x) dx + \int_{\T^d}  H_0(x)( \rho(1,x) - \rho_0(1,x)) dx + \int_{\T^d}  H_0(x) \rho_0(1,x) dx.
\end{align*}
By property (b+), the definition of $\lambda_{q+1}= \lambda_q^b$ and the definition of $H_0$ and $\rho_0$ we estimate every summand and we get
\begin{align*}
 \int_{\T^d}  (H(x) - H_0(x)) \rho(1,x) dx  \leq \| H - H_0 \|_{L^\infty} \| \rho\|_{L^\infty_tL^1_x} \leq \frac{1}{\lambda_0},
 \\
 \int_{\T^d}  H_0(x)( \rho(1,x) - \rho_0(1,x)) dx \leq \| H_0 \|_{L^\infty} \| \rho - \rho_0 \|_{L^\infty_tL^1_x} \leq 4 \Delta,
 \\
  \int_{\T^d}  H_0(x) \rho_0(1,x) dx \leq 4 \Delta.
\end{align*}
The thesis follows observing that $1 > \frac{2}{\lambda_0} + 8 \Delta$.

\subsection*{Acknowledgements}
The authors would like to thank Thomas Alazard, Camillo De Lellis and Maria Colombo for introducing them to the problem. The authors also thank Elio Marconi and Riccardo Tione for advice on the introduction. VG has been supported by the National Science Foundation under Grant No. DMS-FRG-1854344 and MS has been supported by the SNSF Grant 182565.

\bibliography{bibliografia}
\bibliographystyle{alpha}

\end{document}